\newtheorem{proposition}{Proposition}
\newtheorem{convention}[proposition]{Convention}
\newtheorem{theorem}[proposition]{Theorem}
\newtheorem{lemma}[proposition]{Lemma}
\newtheorem{algorithm}[proposition]{Algorithm}
\newtheorem{definition}[proposition]{Definition}
\newtheorem{remark}[proposition]{Remark}
\newcommand{\R}{{\mathbb R}}
\newcommand{\dealii}{\textsc{deal.II}}
\newcommand{\parallelset}{\Pi}
\newcommand{\CellPar}[1]{\mathrel{\|_{#1}}}  
\newcommand{\LocPar}{\mathrel{\|_*}}  
\newcommand{\GlobPar}{\mathrel{\|}}  
\newcommand{\tria}{{\mathbb T}} 
\begin{document}
\markboth{Agelek, Anderson, Bangerth, Barth}{Orienting edges of unstructured meshes}
\title{On orienting edges of unstructured two- and three-dimensional meshes}

\author{
  RAINER AGELEK\\
  Heidelberg
  \and
  MICHAEL ANDERSON\\
  DownUnder GeoSolutions
  \and
  WOLFGANG BANGERTH${}^\ast$\\
  Colorado State University
  \and
  WILLIAM L. BARTH\\
  University of Texas at Austin
}

\maketitle

\begin{abstract}
  Finite element codes typically use data structures that
  represent unstructured meshes as collections of cells, faces,
  and edges, each of which require associated coordinate
  systems. One then needs to store how the coordinate system of
  each edge relates to that of neighboring cells. On the other
  hand, we can simplify data structures and algorithms if we can
  a priori orient coordinate systems in such a way that the
  coordinate systems on the edges follow uniquely from those on
  the cells \textit{by rule}.

  Such rules require that \textit{every} unstructured mesh allows
  the assignment of directions to edges that satisfy the convention in
  adjacent cells. We show that the convention chosen for
  unstructured quadrilateral meshes in the \dealii{} library always allows
  to orient meshes. It can therefore be used
  to make codes simpler, faster, and less bug prone. We
  present an algorithm that orients meshes in $O(N)$
  operations. We then show that consistent orientations are not always possible
  for 3d hexahedral meshes. Thus, cells generally
  need to store the direction of adjacent edges, but our approach also allows the
  characterization of cases where this is not
  necessary. The 3d extension of our algorithm
  either orients edges consistently, or aborts, both
  within $O(N)$ steps.
\end{abstract}

\footnotetext{
  ${}^\ast$Corresponding author.\\
  Authors' addresses:\\
  R. Agelek: Heidelberg, Germany.

  M. Anderson:
  DownUnder GeoSolutions, Perth, Australia.
  \texttt{michaela@dugeo.com}

  W.~Bangerth:
  Department of Mathematics, Colorado State
  University, Fort Collins, CO 80523, USA.
  \texttt{bangerth@colostate.edu}.

  W. L. Barth:
  Texas Advanced Computing Center,
  The University of Texas at Austin, Austin, TX 78758, USA.
  \texttt{bbarth@tacc.utexas.edu}

  W.~Bangerth was partially supported by the National Science
  Foundation under award OCI-1148116 as part of the Software Infrastructure for
  Sustained Innovation (SI2) program; by the Computational
  Infrastructure in Geodynamics initiative (CIG), through the National Science
  Foundation under Award No.~EAR-0949446 and The University of California --
  Davis.
}

\section{Introduction}
\label{sec:introduction}

In most of the common numerical methods for the solution of partial
differential equations on bounded domains $\Omega\subset \R^d, d=2,3$,
one defines approximate solutions by first subdividing $\Omega$ into a
finite number of cells, and then setting up a system of equations on
these cells. Usually, cells are either triangular or quadrilateral
(for $d=2$), or tetrahedral, prismatic, pyramidal, or hexahedral (for
$d=3$). Because certain aspects of the solution may be associated with
cells or edges (in 2d), or cells, faces and edges (in 3d),
essentially all sufficiently general codes use data structures for such meshes that
explicitly or implicitly store not only cells and vertex locations,
but also faces and edges and allow associating data with these objects.

In many cases, the data that is associated with a cell, face, edge, or
vertex may have a physical location somewhere on this object. For
example, when using a ${\mathbb Q}_3$ bicubic finite element on a
rectangular cell, we need to store the index (and possibly the value)
of one degree of freedom per vertex, two along the edge (typically at
1/3 and 2/3 along its extent), and 4 inside the cell. In order to define where
a distance of 1/3 or 2/3 along the edge is, we need to define a coordinate
system on the edge. The same is true when implementing N\'ed\'elec elements
that define degrees of freedom as tangential vectors along edges, and
therefore need to define a direction on each edge. For similar
reasons, we typically also need coordinate systems within each cell.

We then need to define how the coordinate
system defined on the edge relates to that of the adjacent cells. We can
either do this by letting every 
quadrilateral cell store pointers to the four edges along with one bit per
edge that indicates how the direction of the edge embeds in the coordinate system of the
cell. Or we could seek a convention by which we orient all edges of the mesh once at the
beginning so that the orientation of each cell implies the orientation of its
bounding edges. In the latter case, we would not need to store direction
flags, and algorithms built on this convention would not need to provision for
different possible directions, thereby greatly simplifying code construction
and maintenance.

This paper is concerned with the following two questions:
\begin{itemize}
\item Is it possible to
  find such a convention for quadrilateral meshes
  (an example of such a mesh is shown in Fig.~\ref{fig:wing})? We will
  constructively show that this is indeed possible in 2d when adopting
  the convention that the edges that bound opposite sides of each cell
  point in the same direction; see the left panel of
  Fig.~\ref{fig:convention}.%
\footnote{On the other hand, the only two other reasonable conventions for edge
  directions, namely either requiring them to be (i) oriented in clockwise or
  counter-clockwise direction around the cell, or (ii) oriented away
  from two, oppositely located vertices, both do not always allow for
  globally unique directions of all edges of a mesh; see
  Remark~\ref{rem:circular-edges}.}

\item Is it possible to assign directions to all edges of a mesh that satisfy
  this convention in a computationally efficient manner? We will demonstrate
  that this is in fact so: Our construction of a proof for the answer to the
  first question (in
  Section~\ref{sec:2d}) also implies an algorithm that we show to be order
  optimal, i.e., it runs in a time proportional to the number of edges
  in the mesh.
\end{itemize}

\begin{figure}[tbp]
  \begin{center}
    \phantom{.}
    \hfill
    \includegraphics[height=.3\textwidth]{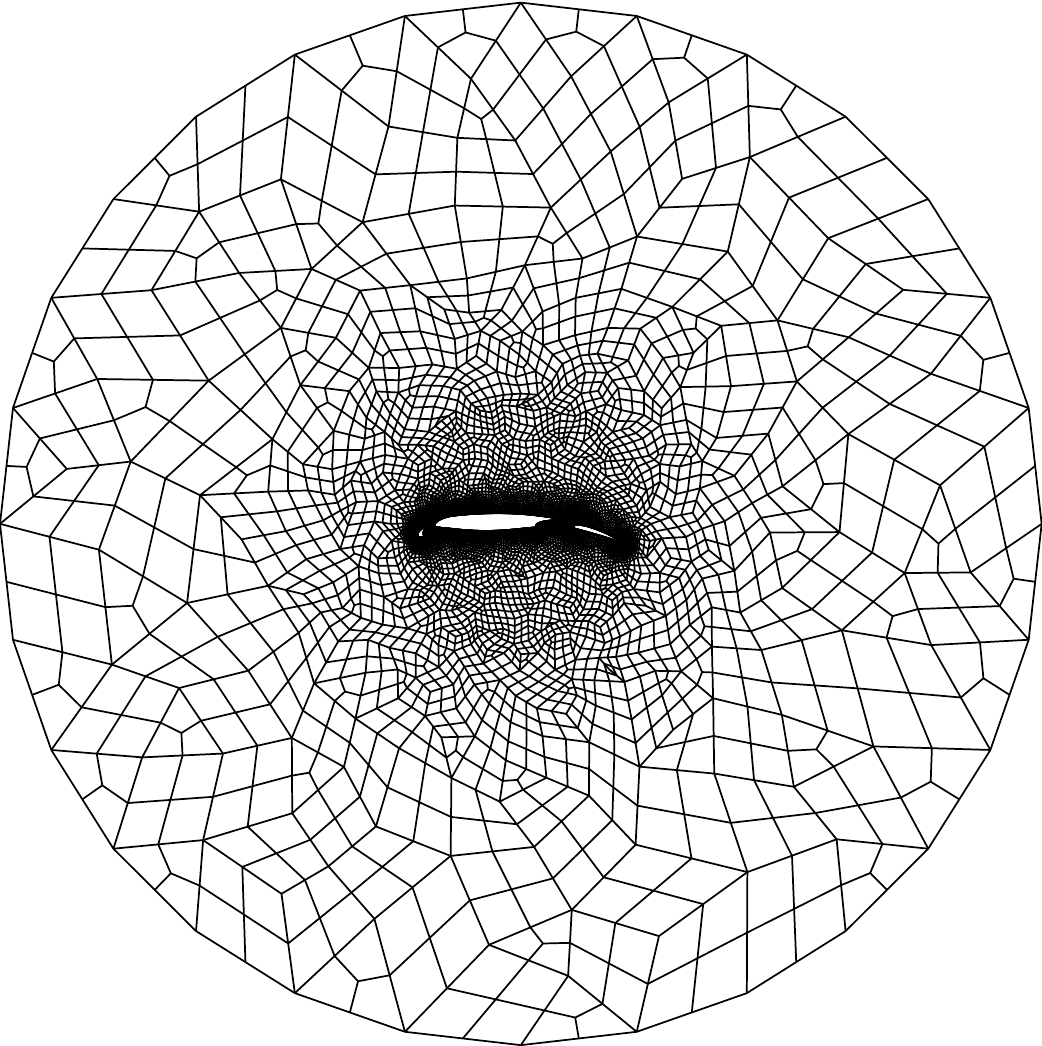}
    \hfill
    \includegraphics[height=.225\textwidth]{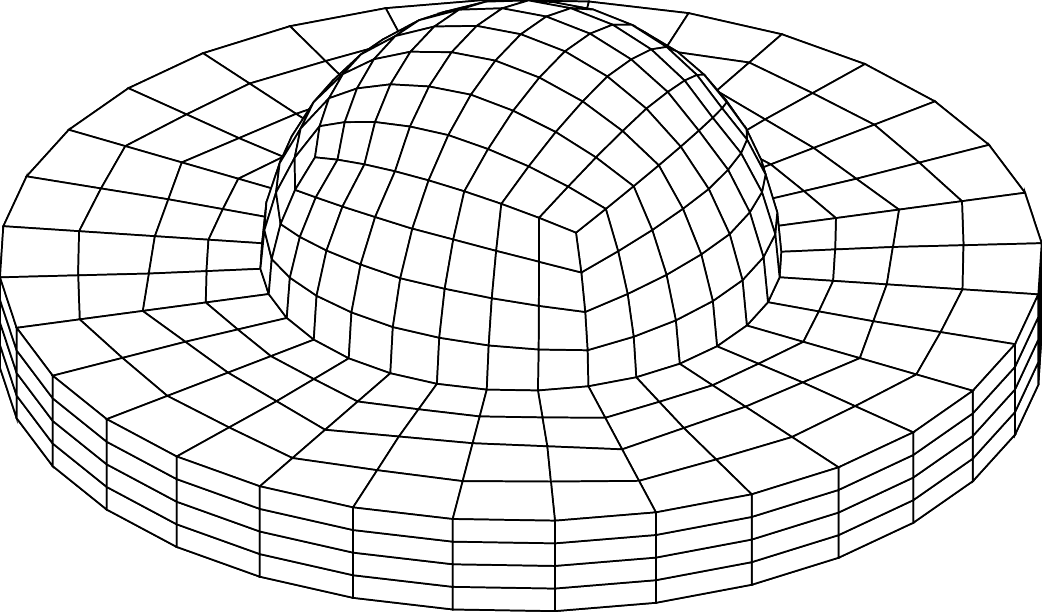}
    \hfill
    \phantom{.}
    \caption{\it A typical two-dimensional quadrilateral mesh around an
      airfoil with 29,490 cells (left). Surface of a
      three-dimensional mesh with 2,304 cells (right).}
    \label{fig:wing}
  \end{center}
\end{figure}

\begin{figure}[tbp]
  \begin{center}
    \phantom{.}
    \hfill
    \includegraphics[width=.2\textwidth]{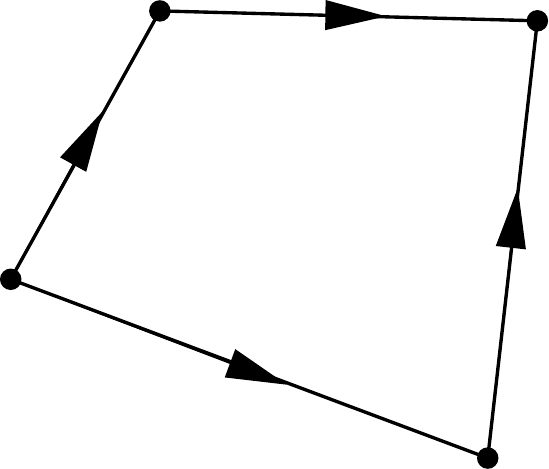}
    \hfill
    \includegraphics[width=.26\textwidth]{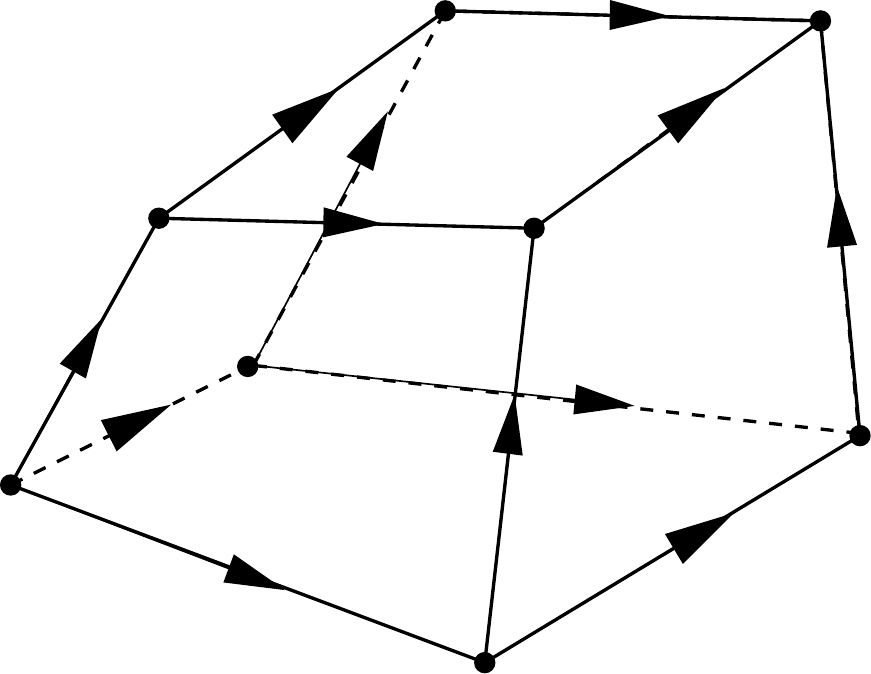}
    \hfill
    \phantom{.}
    \caption{\it Choice of directions of edges when seen with regard
      to one particular orientation of a ``coordinate system'' on a
      cell. Note in particular that this choice implies that opposite
      edges in a cell must have parallel directions. Left: For quadrilaterals in $d=2$.
      Right: For hexahedra in $d=3$.}
    \label{fig:convention}
  \end{center}
\end{figure}

On the other hand, we will show in Section~\ref{sec:3d} that the
corresponding convention in 3d (see the right 
panel of Fig.~\ref{fig:convention}) allows for examples
where it is not possible to orient edges so that coordinate systems of
adjacent cells are implied. However, we will show that the
extension of the 2d algorithm to 3d either produces a
consistent set of edge orientations or fails, both within order optimal
complexity. There are, however, important classes of 3d meshes that
always allow such edge directions, and we will discuss these in
Section~\ref{sec:3d-always-orientable}. 
\begin{includefaces}
  On the other hand, in 3d one may want to
  not only have consistent directions for edges of cells, but also for
  faces, and the corresponding case is discussed in
  Section~\ref{sec:3d-faces} where we show that there are cases where it is
  possible to find either consistent edge or face orientations, but not
  both.
\end{includefaces}

The paper is complemented by Section~\ref{sec:notation} defining
notation, conclusions in Section~\ref{sec:conclusions}, and an
appendix in which we we prove a generalization of the main
statement of the paper to general manifolds.

\paragraph*{Related literature}
Finite element software packages have traditionally taken different routes to
dealing with the problem of relative orientations of cells and their edges and
faces. In many cases, software has been developed to only support linear or
quadratic $H^1$-type elements, in which case edge and face orientations are not
in fact of any concern at all. Others use triangular or tetrahedral meshes for
which it is necessary to explicitly store edge orientations; see, for example,
\cite{BM02,AC03}. For quadrilaterals and hexahedra, 
strategies for implementation in specific packages are discussed in \cite{RKL09} for the FEniCS
library and \cite{TL08} for the KARDOS package. We have found that DUNE \cite{PBDEKKOS08} and Nektar++ \cite{Can15} appear to
explicitly store edge directions as part of their mesh data structures or
finite element implementations, but we could not find written elaborations of
their strategies in publications or overview documents. Finally, descriptions
of finite 
elements that use \textit{global} conventions for edge orientations
(i.e., based on vertex coordinates or indices, instead of in relation
to locally adjacent cells)
can be found in \cite{Zag06} and are used,
for example, in libMesh
\cite{KPSC07}.

Constructions similar to those in this paper have previously been discussed in
the discrete geometry literature, see \cite{Het95,AZ04,HW08} for examples. However,
this part of the literature is not typically concerned with algorithms and
their complexity (such as our discussions in Sections~\ref{sec:2d} and
\ref{sec:3d}), nor with the particular application of these ideas to finite
element meshes (such as our discussion of specific types of meshes
in Section~\ref{sec:3d-always-orientable}). Our contribution therefore provides a
relevant extension of what is available in the literature.

\paragraph*{A historical note} The algorithms discussed herein were
implemented in the \dealii{} library in 2003, with an incomplete
discussion of the topic available in the documentation of
\dealii{}'s \texttt{GridReordering} class. A more formal description of these
algorithms has recently appeared in \cite{HH15}; it extends our 2d algorithms to
meshes stored on distributed memory, parallel machines, but it lacks the
complexity analysis that we provide here, as well as much of the discussion of
the 3d case.

The introduction in \dealii{} of the convention discussed above predates 2003.
In its earliest days, the library was almost always used on small
problems for which edge orientations could be determined by hand on a
piece of paper, and little consideration was given to the
question whether it always exists and if so whether there is an efficient
algorithm to generate it automatically. However, as the project grew
and more applications used the 3d part, these questions became more
important. Initially, an algorithm that generated edge orientations using a
backtracking algorithm was implemented. This works for meshes
with up to a few hundred cells, but fails due to excessive run times for
larger ones. In particular, it is easy to construct meshes for which it had
exponential run time. Therefore, more efficient algorithms were needed,
leading to the results reported here.

\paragraph*{Availability of implementations} We have implemented the
algorithms outlined here in the \dealii{} library (see
\url{http://www.dealii.org/} and \cite{BHK07,dealII82}), and they are
available as part of the \texttt{GridReordering} class under the LGPL open source
license.

\section{Notation and conventions}
\label{sec:notation}

Throughout this paper, we will consider triangulations $\tria$ such as
those shown in Fig.~\ref{fig:wing}, as a collection
$\tria=\{K_1,\ldots,K_{N_K}\}$ of quadrilateral or hexahedral cells
$K_i$. These cells can be considered as open geometric
objects $K_i\subset \R^d, d=2,3$ so that (i)~$K_i\cap K_j=\emptyset$
if $i\neq j$, (ii)~the intersection of the closure of two cells, $\bar
K_i \cap \bar K_j$, is either empty, a vertex of the mesh, or a
complete edge or face of both cells, and (iii)~$\bigcup_i \bar K_i =
\bar\Omega$ where $\Omega\subset \R^d$ is the bounded, open domain that is
subdivided into the triangulation. We assume that the triangulation
has only finitely many cells. For the purposes of this paper, we do
not require that the union of mesh cells corresponds to a simply
connected domain. We will rely on the very practical assumption that
the volume of each cell is positive and that cells are convex.

That said, the bulk of our arguments will not make use of this
geometric view of a triangulation. Rather,
it is convenient to reformulate the problem under consideration using
the language of graphs. When viewing a finite element mesh as an undirected    
graph, we consider it as a pair $G_\tria=(V,E)$ with the vertices 
$V=\{v_1,\ldots,v_{N_v}\}$ being the vertices of the mesh, and
edges $E=\{e_1,\ldots,e_{N_e}\}\subset \{\{v_a,v_b\}: v_a, v_b\in
V, v_a\neq v_b\}$  being the four ($d=2$) or twelve ($d=3$) edges of the
cells. Given the construction of this graph as a representation of a
mesh, there is then a collection of cells $\tria=\{K_1,\ldots,K_{N_K}\}$
where we can alternatively see each $K_i$ as either an ordered
collection of 4 vertices or an ordered collection of 4 edges (in 2d;
in 3d it is 8 vertices and 12 edges). The index $\tria$ on $G_\tria$
indicates that we are not considering general graphs but indeed only
those graphs that originate from a triangulation of a domain
$\Omega\subset\R^d$.

For a given cell $K$, let $v(K)\subseteq V$ be the set of its vertices
and $e(K)\subseteq E$ the set of its edges. For a given edge, let
$K(e)\subseteq{\tria}$ be the set of adjacent cells. In 2d, 
$|K(e)|$ is either one or two (depending on whether the edge is at the
boundary or not); in 3d, $|K(e)|\ge 1$ since arbitrarily many cells may be
adjacent to a single edge.

As discussed above, we are interested in assigning a direction to
each edge in such a way that the direction of the edge is implied from
the orientation of a cell. That is, for a mesh with associated graph
$G_\tria$, we would like to have a directed graph $G_\tria^+=(V,E^+)$ with the same vertex
set and edges, but where each edge is now considered directed (i.e.,
represented by an ordered pair of vertices). To be precise, this 
graph is in fact \textit{oriented} since we never have both $(v_i, v_j) \in E^+$ and 
$(v_j, v_i) \in E^+$ as may occur in directed graphs.

\subsection{Goals for orienting meshes}

As discussed in the introduction, practical implementations of the
finite element method need to define a coordinate system on both cells
and edges of a mesh. This is typically done by \textit{mapping} a
reference cell $\hat K=[0,1]^d$ and edge $\hat e=[0,1]$ to each cell
$K$ and edge $e$, along with the coordinate systems. The details of
this are of no importance here other than the following two
statements: (i) On each cell, we need to designate one of the four
(or eight) vertices as the ``origin''; each of the two (three) edges adjacent to the
origin then form the first and second (and third) ``coordinate
axis''. If we insist that the mapping from the reference cell $\hat K$
to $K$ has positive volume, then the choice of the origin fixes the
coordinate system in 2d; in 3d, each of the edges adjacent to the
origin can be chosen as the first coordinate axis, with the other two
then fixed. In other words, each cell allows for 4 possible choices of
coordinate systems in 2d, and $8\times 3=24$ in 3d. (ii) On each edge,
we define a coordinate system by choosing a ``first'' vertex.

With this, there are a total of $4^{|N_K|}$ choices in 2d for the coordinate systems
on the cells (and $24^{|N_K|}$ in 3d), and $2^{|N_e|}$ for the
coordinate systems of the edges. The question is now whether it is
possible to choose them in such a way that if the coordinate systems
of all cells are specified, we can infer the coordinate system on each
edge unambiguously, regardless of which cell adjacent to the edge we
consider. Or conversely: is it possible to specify directions for all
edges in such a way that
this implies a unique choice of coordinate systems for all cells?

For reasons that will become clear later, we will prescribe directions
of the edges of a cell when seen in the ``coordinate system'' of the
cell as shown in Fig.~\ref{fig:convention}. Clearly, it will be
possible to choose the coordinate systems of two neighboring cells in
such a way that they do not agree on the direction of the common
edge. Such a choice would require an implementation to store for each
cell whether or not an edge's (global) direction does or does not match the
direction that results from the (local) convention.

On the other hand, let us assume that we can find an orientation for
all edges of the mesh so that in each cell, ``opposite'' edges are
``parallel''. Then each cell has one vertex from which all oriented
edges originate, and we can choose this as the ``origin'' of the
cell. Using this choice of cell coordinate system, edge directions are
then again uniquely implied and the problem is solved. In other words,
we have reduced the problem of orienting all cells and edges to the
problem of finding one particular set of edge orientations that
satisfy the \textit{``opposite'' edges are
``parallel''} property.

This property is easy to understand intuitively. It is significantly
more cumbersome to describe it rigorously in the graph
theoretical language, and so we will only do this for the 2d case. (The
3d case follows the same approach but requires lengthy notation
despite the fact that the situation is relatively easy to understand.)
In order to formulate the convention, 
we need to fix the order in which we consider
vertices and edges as part of a cell. We do so using a lexicographic
order for vertices, as shown in
Fig.~\ref{fig:vertex-edge-convention}. Edges are numbered so that we
first number the edge from vertex 0 to 2, then its translation in
the perpendicular direction (i.e., from vertex 1 to 3), and then the
edges connecting the vertices of the first two edges. Both of these orders
reflect the tensor product structure of quadrilaterals and are easily
generalized to hexahedra (or higher dimensions, if desired). The
choice of edge directions within each cell, as shown in
Fig.~\ref{fig:convention}, then
ensures that the coordinate system of the edge is simply the
restriction of the cell's coordinate system to the edge.

\begin{figure}[tbp]
  \begin{center}
    \phantom{.}
    \hfill
    \includegraphics[width=.3\textwidth]{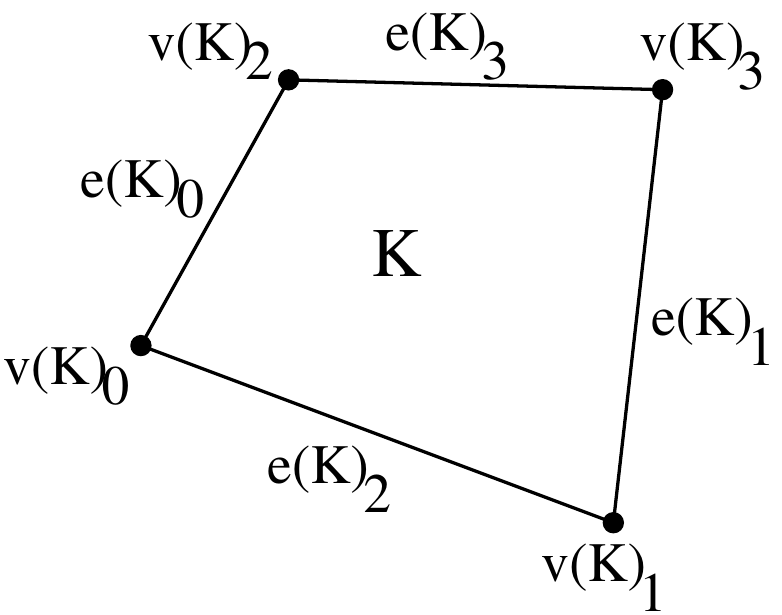}
    \hfill
    \phantom{.}
    \caption{\it Numbering convention for vertices and edges of
      two-dimensional cells. Here, $v(K)^+$ is the ordered set of
      vertices that bound the cell, and $e(K)^+$ the ordered set of
      edges.}
    \label{fig:vertex-edge-convention}
  \end{center}
\end{figure}

With this definition, each cell is described by an ordered tuple of
its vertices where we will assume that the first element of this tuple
corresponds to the ``origin''. Equivalently, we can describe each cell
as an ordered tuple of four (unordered) edges, where the ``origin'' of
the cell is now the common vertex of edges 0 and 2. Because there are
4 possible choices for the origin of the cell, there are four
ways to describe a cell that are equivalent up to rotation.

With these preparations, we can finally define what it means for the
edges of a directed graph $G_\tria^+$ that represents a quadrilateral mesh
to be consistently oriented:
\begin{convention}
  \label{conv:1}
  We call a graph $G_\tria^+=(V,E^+)$ \textit{consistently oriented with respect to cell $K\in\tria$}
  if among the four equivalent choices of vertex tuples of $K$, there is one so that the
  following directed edges are all elements of $E^+$:
  $e(K)_0=(v(K)_0,v(K)_2)$, 
  $e(K)_1=(v(K)_1,v(K)_3)$, 
  $e(K)_2=(v(K)_0,v(K)_1)$, 
  $e(K)_3=(v(K)_2,v(K)_3)$.
\end{convention}

\begin{convention}
  \label{conv:2}
  We call a graph $G_\tria^+=(V,E^+)$ \textit{consistently oriented} if it is
  consistently oriented with respect to all cells in $\tria$.
\end{convention}

As discussed above, a consistently oriented graph has edges that allow
us to choose a coordinate system on each cell so that the edge
orientations follow immediately from the cell orientations.
Similar definitions can be given for the 3d case. The purpose of this
paper is to ask the question whether it is always possible to
consistently orient the edges of a given mesh $\tria$, and if this is the case,
whether it can efficiently be done by an algorithm.

\subsection{Reformulation of Conventions \ref{conv:1} and \ref{conv:2}}

The developments
in the following sections all depend on the fact that
Convention~\ref{conv:1} can equivalently be stated by only looking at
sets of ``parallel edges'' of quadrilaterals:%
\footnote{The definition of whether edges are parallel given here only
  uses the graph theoretic context. In the language of finite element
  methods, it could equivalently be defined in geometric terms by
  using the coordinates of the vertices of the original mesh
  $\tria$. We can then view each edge of the
  graph as a (not necessarily straight) line connecting the adjacent
  vertices. Each cell $K\subseteq \tria$ occupies a
  subset of $\R^d$ that is the image of the reference square or cube
  $[0,1]^d$ under a homeomorphic mapping $\phi_K$. We can then
  call two edges $e_1,e_2$ parallel in $K$ if their preimages,
  $\phi_K^{-1}(e_1)$ and $\phi_K^{-1}(e_2)$, i.e., the corresponding
  edges of the reference cell, are parallel line segments in the geometric sense. This may be more
  intuitive, but we have no further use for mappings and
  transformations in this paper and will therefore not further explore
  the geometric setting.} 

\begin{definition}
  \label{def:parallel}
Two edges $e',e''\in E$ are called \textit{parallel edges of $K$} if $e'=e''$
or if they bound $K$ but do not share a vertex. If $e',e''\in E$ are parallel
edges of $K$, then we denote $e' \CellPar{K}
e''$. We say that $e',e''\in E$ are \textit{locally parallel} and denote $e'
\LocPar e''$ if there exists a cell $K \in {\cal K}$ so that $e' \CellPar{K} e''$.
\end{definition}
The four edges of a quadrilateral then fall into two classes of two edges each that are
parallel. (In 3d, a corresponding but more notationally cumbersome definition would yield three classes of
four parallel edges each.) In practice, one does not usually have to verify
if edges are parallel, but only enumerate classes of parallel edges;
this does not require testing all equivalent edge sets: in 2d, edges
$e(K)_0$ and $e(K)_1$ are parallel, as are $e(K)_2$ and $e(K)_3$, for
any arbitrarily chosen equivalent edge set.
We can then define consistent orientations via
these classes of parallel edges:

\begin{convention}
  \label{conv:3}
  Two oriented, parallel edges $e', e''\in E^+$ are called
  \textit{consistently oriented with respect to cell $K$} if
  $e'=(v(K)_0,v(K)_2)$, 
  $e''=(v(K)_1,v(K)_3)$ with regard to the one equivalent vertex set
  within which $e'=e(K)_0$ and $e''=e(K)_1$.
\end{convention}

\begin{convention}
  \label{conv:4}
  We call a graph $G_\tria^+=(V,E^+)$ \textit{consistently oriented with respect to cell $K$}
  if all sets of parallel edges of $K$ are consistently oriented
  with regard to $K$.
\end{convention}

In other words, consistent orientation on a cell can be tested by only
verifying consistent orientation of all the edges in all sets of
parallel edges. Consequently, \textit{testing} that a graph $G_\tria^+$ is
consistently oriented is relatively easy and can be done by verifying
the condition on every cell separately. A verification algorithm can
therefore easily be written with $O(N_K)$ complexity and,
consequently, with $O(N_e)$ because $\frac 12 N_K \le N_e \le 4N_K$.
On the other hand, \textit{generating}
a consistent orientation requires a \textit{global} algorithm because
the orientation of one edge implies that of all of its parallel edges
on all of its adjacent cells, which itself implies the orientation of
edges parallel on cells twice removed, etc. Because of this property,
it is not a priori clear that one can find a linear-time algorithm
that can find a consistently oriented graph $G_\tria^+$ given the graph $G_\tria$
associated with a mesh. However, as we will show below, this is indeed
possible.

As a final note in this section, let us state that for all algorithms that
follow, we assume that we have methods to generate the vertex and edge sets
$v(K),e(K)$ for a given cell $K$ with ${\cal O}(1)$ complexity. This can
easily be achieved by storing this information as the rows of $N_K\times 4$
matrices as is commonly done in all widely used software (in 3d, the vertex
adjacency matrix is of size $N_K\times 8$ and the edge adjacency
matrix is of size $N_K\times 12$). Furthermore, we will assume that finding
the cell neighbors of a given edge, $K(e)$, requires ${\cal O}(1)$ time. It is
obvious that in 2d, this can be achieved by storing an $N_e\times 2$ matrix
storing the indices of the one or two cells that are adjacent to each edge;
populating this matrix from $e(K)$ requires only a single loop over all
cells. In 3d, the number of cells adjacent to each cell can in principle be
equal to $N_K$, requiring data structures that can either not be queried in
${\cal O}(1)$ complexity, or can not be built with ${\cal O}(N_K)$ complexity;
however, in actual finite element practice, the meshes we consider will never
have more than, say, a dozen or so elements joined at any one edge, so that we
can consider $|K(e)|$ to be bounded by a constant, thereby allowing
storing information in tables of fixed width and ensuring that $K(e)$ can be queried
with ${\cal O}(1)$ complexity. These assumptions will be important to
guarantee the overall complexity of the algorithms we will consider in the
following.

As stated above, we assume that $K(e)$ can be \textit{evaluated} in ${\cal
  O}(1)$ time. To make this possible requires \textit{building} appropriate data structures, and
depending on what information is available this may require more than ${\cal
  O}(N)$ time. For example, if one only knew the vertices of each cell, i.e.,
$v(K)$, then building tables that can evaluate $e(K)$ in ${\cal O}(1)$ time
requires ${\cal O}(N\,\log N)$ time; this is the typical case with file
formats that store mesh information. On the other hand, if each cell already
knows all of its edge neighbors, as is often the case inside mesh generators
or finite element codes,
then building the tables to evaluate $e(K)$ in ${\cal O}(1)$ only costs ${\cal
O}(N)$ time and is consequently of the same complexity as the algorithms we
will discuss in the following.

\section{The two-dimensional case}
\label{sec:2d}

In this section, we will consider the two-dimensional problem, which can be
stated as follows: \textit{Given a graph $G_\tria$ that originates
  from a mesh $\tria$ composed of
quadrilaterals, find a consistently oriented graph $G_\tria^+$.}

As mentioned above, consistency of edge directions only requires us to ensure
consistency within sets of parallel edges. To this end, we will in the
following describe methods that first find all edges that are, in some sense
that goes beyond that defined in Definition~\ref{def:parallel}, parallel to
each other (Section~\ref{sec:2d-parallel}). We will then show how we can
consistently orient all edges that are in this sense parallel to each other
(Section~\ref{sec:2d-orienting-ribbon}) and finally how we can orient all
edges in the graph (Section~\ref{sec:2d-orienting-graph}).

\subsection{Decomposing $E$ into sets of parallel edges}
\label{sec:2d-parallel}

As we will show next, the set of edges $E$ of $G_\tria$ can be decomposed into
a collection of mutually exclusive sets of edges, where the edges in
each set are all parallel to each other in some global sense. 
This follows from the fact that the relation \emph{locally parallel} 
is reflexive (i.e., $e \LocPar e$ for all edges $e$) and symmetric (i.e. $e' \LocPar e''$ 
implies $e'' \LocPar e'$ for all edges $e',e''$) by construction. Let us define two 
edges $e'$, $e''$ to be globally parallel if there is a finite, possibly empty sequence of 
edges $e_0, e_1, \ldots, e_s$ such that $e' \LocPar e_0 \LocPar e_1 \LocPar \ldots \LocPar 
e_s \LocPar e''$ and denote this by $e' \GlobPar e''$. One immediately checks that this 
relation is reflexive, symmetric and transitive (i.e. $e' \GlobPar e''$ and $e'' \GlobPar 
e'''$ imply $e' \GlobPar e'''$ for all edges $e',e'',e'''$) and hence forms an equivalence 
relation.%
\footnote{In an abstract sense, we have constructed the relation $\GlobPar$ as 
the \emph{transitive hull} of the relation $\LocPar$.}
This relation then partitions $E$ into disjoint sets of (globally) parallel edges 
\cite[Chapter I, \S 3]{Kur63} or \cite[Corollary 28.19]{LP98}.

Algorithmically, we can
recursively construct each of these sets by starting with an edge $e$ and build
the set $\parallelset(e)\subseteq E$ of all edges
that are globally parallel to $e$ by first
adding the edges opposite $e$ in the cells adjacent to $e$, then those
edges that are opposite to the ones added previously, and so on.  Sets
of parallel edges are central to the rest of the paper, since our edge
direction convention requires that they will all have parallel
directions.  An intuitive interpretation of the importance of parallel edges is as
follows: assume we had already found the directed graph $G_\tria^+$. Then,
flipping the direction of an edge $e$ would make the triangulation
non-consistent, and to make it consistent again we would have to flip
the directions of a number of other edges as well; the entire set of
edges that needs to be flipped, including $e$, is precisely $\Pi(e)$.

With this knowledge, let us concisely define an algorithm to find all
elements of a parallel set for an edge $e$ as a first step:

\begin{algorithm}[(Construct one set of parallel edges)]
  \label{alg:parallel-edges}
  Let $e\in E$ be a given edge and generate the set $\parallelset(e)$ of edges
  parallel to $e$ recursively as follows, where the set $\Delta_k$ consists
  of those edges that we add to $\Pi(e)$ in each step as we grow it away from $e$:
  \begin{enumerate}
  \item Set $\parallelset_0(e)=\emptyset$, $\parallelset_1(e)=\{e\}$,
    $\Delta_0=\{e\}$, $k=1$.
  \item While $\Delta_{k-1}\neq \emptyset$, do:
    \begin{enumerate}
    \item Set $\Delta_k=\emptyset$.
    \item For each $\delta\in\Delta_{k-1}$:
      \begin{itemize}
      \item Set ${\cal E}(\delta)=\emptyset$.
      \item For each $K\in K(\delta)$:
        \begin{enumerate}
        \item Set ${\cal N}_K(\delta):=\{\varepsilon \in e(K):
          \varepsilon \CellPar{K} \delta \} \backslash \{\delta\}$.
        \item Set ${\cal E}(\delta) = {\cal E}(\delta) \cup {\cal N}_K(\delta)$.
        \end{enumerate}
      \item Set $\Delta_k = \Delta_k \cup \left({\cal E}(\delta) \backslash\parallelset_k(e))\right)$.
      \end{itemize}
    \item $\parallelset_{k+1}(e) = \parallelset_k(e) \cup \Delta_k$.
    \item Set $k:=k+1$.
    \end{enumerate}
    \item Set $\parallelset(e)=\parallelset_k(e)$.
  \end{enumerate}
\end{algorithm}

Fig.~\ref{fig:parallel-sets} gives a graphical depiction of the construction
of one such set. Starting from a given edge $e$, the set
$\parallelset(e)$ grows in each step by
at most the two edges in $\Delta_k$ along a line that always intersects cells from
one side to the opposite one, and connects these parallel edges.
Growth of the set in one direction stops whenever this line hits a boundary
edge (in which case the set of opposite edges for this boundary edge $\delta$,
${\cal E}_K(\delta)$, has only one member, which furthermore is already in
$\parallelset_{k-1}(e)$), or if both ends of the line meet ``head-on'' (in which case all
elements of ${\cal E}(\delta)$ for all $\delta\in\Delta_{k-1}$ are already in
$\parallelset_{k-1}(e)$ and thus $\Delta_k=\emptyset$, upon which the
iteration terminates).

\begin{figure}[tbp]
  \begin{center}
    \includegraphics[width=.2\textwidth]{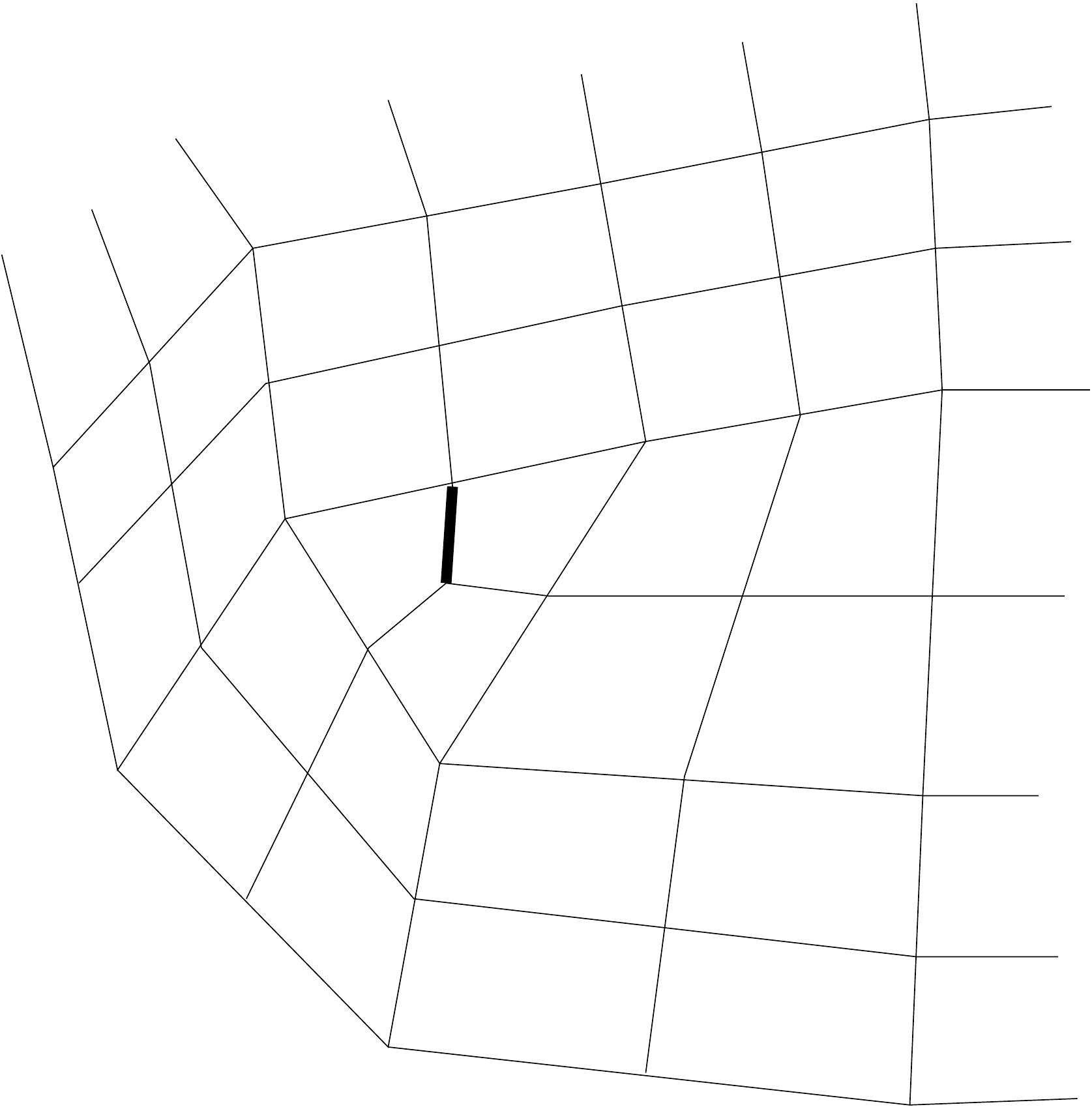}
    \hfill
    \includegraphics[width=.2\textwidth]{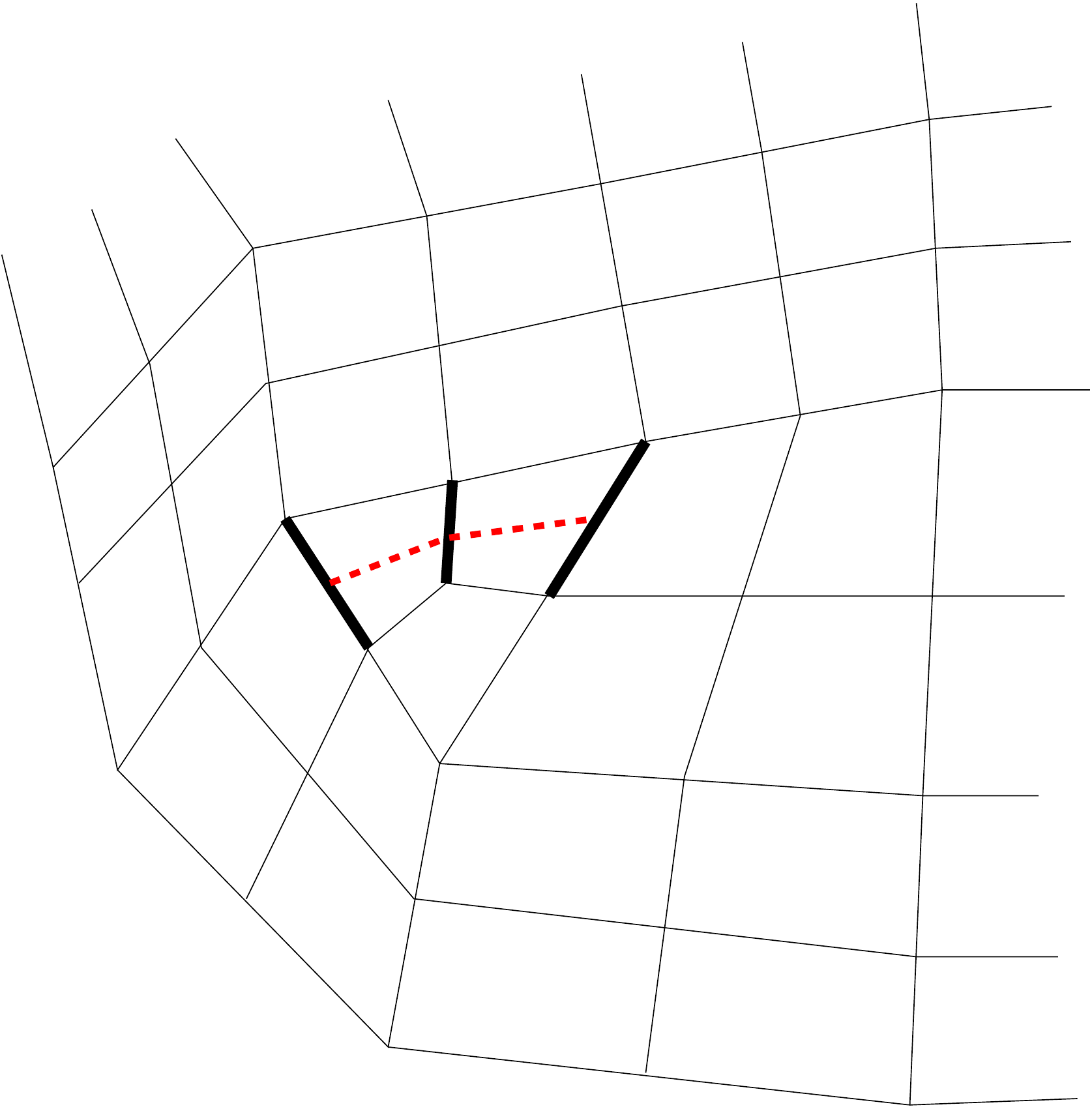}
    \hfill
    \includegraphics[width=.2\textwidth]{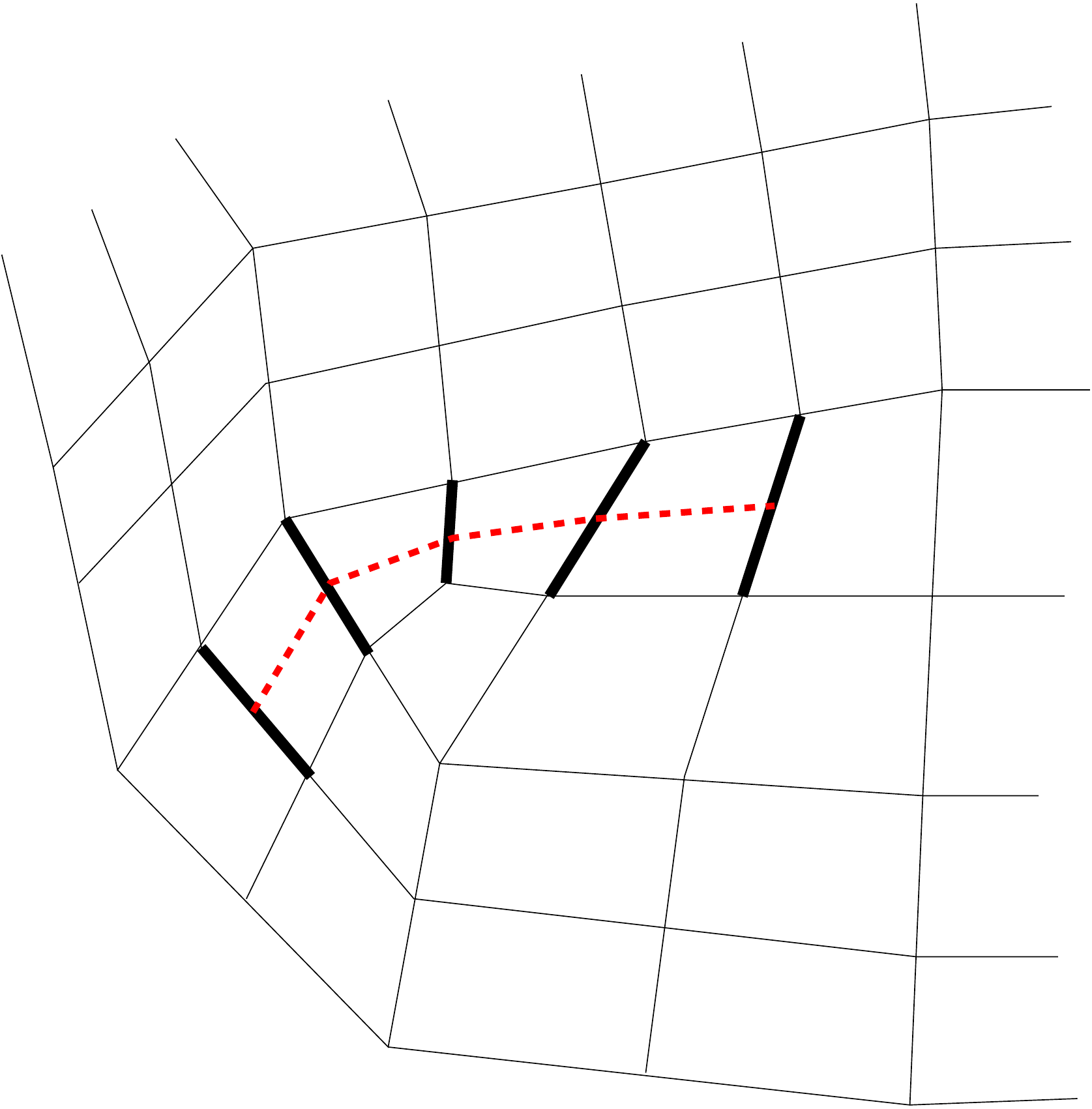}
    \hfill
    \includegraphics[width=.2\textwidth]{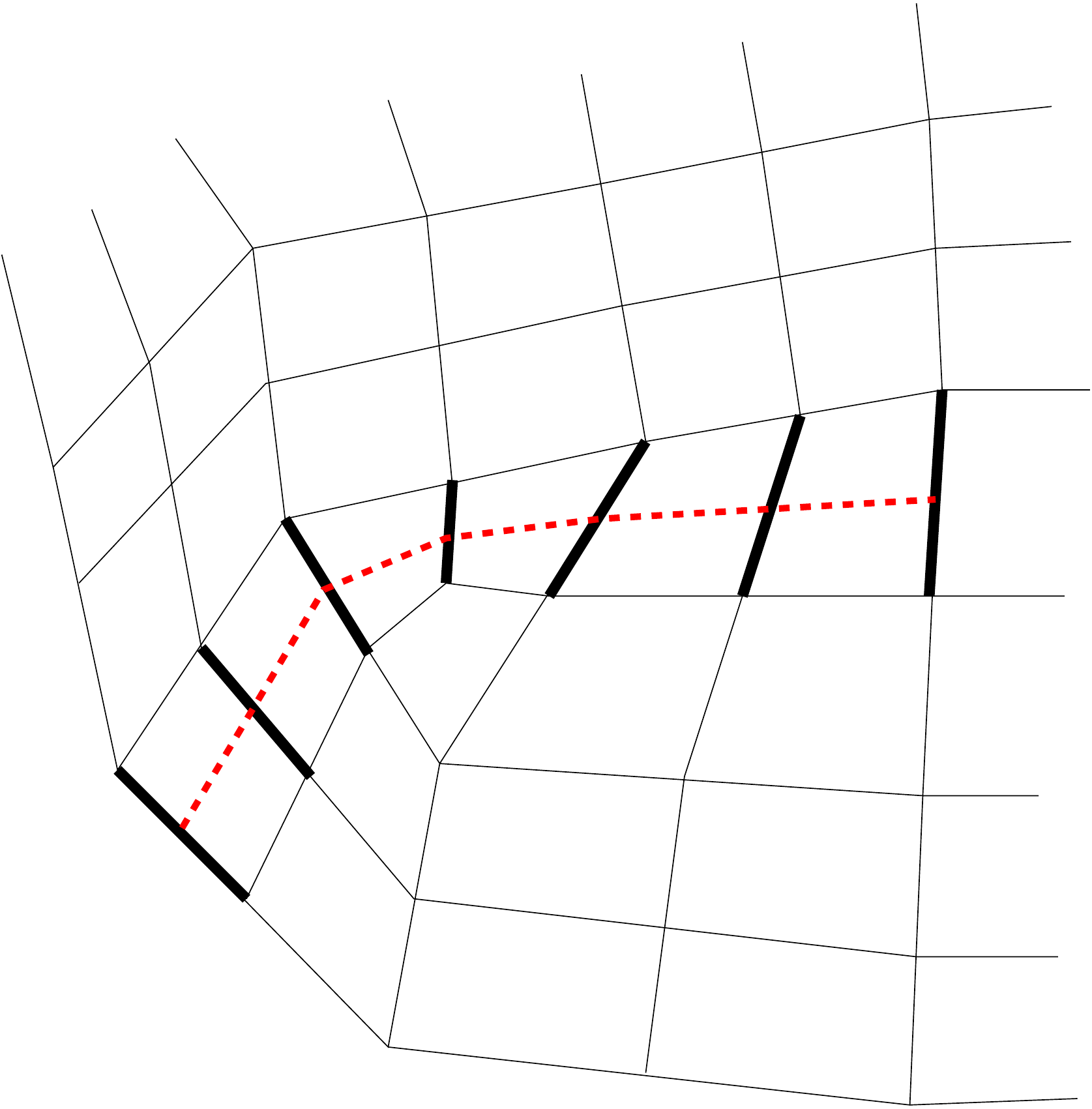}
    \caption{\it Construction of the parallel sets $\parallelset_k(e)$,
    $k=1,\ldots, 4$, indicated by bold edges. The red line connects all of
    them and grows in both directions.}
    \label{fig:parallel-sets}
  \end{center}
\end{figure}

To assess the overall run time of this algorithm, we note that in 2d, each
edge has exactly two neighboring cells unless it is at the boundary,
i.e., $|K(\delta)|\le 2$. Furthermore, within each cell, there is
exactly one other parallel edge to a given edge $\delta$, i.e.,
$|{\cal N}_K(\delta)|=1$. Consequently, in step (2)(b) we have 
$|{\cal E}(\delta)|\le 2$ and it follows that $|\Delta_k|\le 2$. With the
appropriate data structures -- for example, by representing sets of
known maximal cardinality through fixed-sized arrays --, all
operations in steps (2)(a)--(d) can then be executed in 
${\cal O}(1)$ operations. Furthermore, since $\parallelset_k(e)$ grows by
one or two elements per iteration, the loop represented by step (2)
executes at most $|\parallelset(e)|$ times. The total cost of the
algorithm is therefore ${\cal O}(|\parallelset(e)|)$.

The next step is based on the realization that every edge $e$ in the
graph $G_\tria$ can be uniquely sorted into one of a collection of mutually
exclusive sets
$\pi=\{\parallelset_1,\ldots,\parallelset_{N_\parallelset}\}$.
Each class $\Pi_i$ is constructed as above. Because the connecting
line for each parallel set is either closed or ends on both sides at
the boundary, the number of distinct sets of parallel edges, $|\pi|$, is at least
half the number of boundary edges, but of course at most half the number of edges
in $G_\tria$.

Algorithmically, we can construct the collection $\pi$ of parallel sets in the following way:
\begin{algorithm}[(Construct the set of parallel edge sets)]
  \label{alg:all-parallel-edges}
  Construct the set of parallel edge sets as follows:
  \begin{enumerate}
  \item Set $\pi=\emptyset, {\cal E}=E$.
  \item While ${\cal E} \neq \emptyset$, do:
    \begin{enumerate}
    \item Choose any $e \in {\cal E}$.
    \item Compute $\parallelset=\parallelset(e)$ using Algorithm~\ref{alg:parallel-edges}.
    \item Set $\pi = \pi \cup \{\parallelset\}$.
    \item Set ${\cal E} = {\cal E} \backslash \parallelset$.
    \end{enumerate}
  \end{enumerate}
\end{algorithm}

Here, the set of not-yet-classified edges ${\cal E}$ is reduced one
equivalence class -- i.e., by one set of globally parallel edges -- at a
time. Because the decomposition of edges into
equivalence classes is unique, it is clear that in each iteration,
$\parallelset\subseteq{\cal E}$. Furthermore,
$\parallelset(e)\supseteq\{e\}$ and so $|\parallelset|\ge 1$; thus,
the iteration is guaranteed to terminate.

More concretely, the cost of each iteration is given by
Algorithm~\ref{alg:parallel-edges}, i.e., ${\cal O}(|\Pi(e)|)$. The
overall cost is therefore $\sum_{\parallelset\in\pi} {\cal
  O}(|\Pi|)$. On the other hand, because edges can be uniquely sorted
into their equivalence classes, we know that 
$\bigcup_{\parallelset\in\pi} \parallelset = E$. Thus, the cost of
Algorithm~\ref{alg:all-parallel-edges} is ${\cal O}(|E|)$, i.e., of
optimal complexity.

\subsection{Orienting the elements of a set of parallel edges}
\label{sec:2d-orienting-ribbon}
Our convention was only that opposite edges in a cell have parallel
directions, but there was no requirement on the relative directions of
adjacent (non-opposite) edges within a cell. In fact, that was the
basis for restating Conventions~\ref{conv:1} and \ref{conv:2} in terms of
Convention~\ref{conv:3} and \ref{conv:4}. It is thus easy to see that we
only have to make sure that we have consistent directions of all edges within
each set of parallel edges, and that consistency of edges within each such set
is independent of the directions of edges in all other parallel sets. The
following lemma proves that within each such equivalence class a
consistent set of directions exists:

\begin{lemma}
  \label{lemma:existence-for-one-parallel-set}
  Let $e\in E$. Then there exists a choice of orientations for the elements of
  $\parallelset(e)$ that is consistent; i.e., for all
  $e',e''\in\parallelset(e)$ so that $e' \CellPar{K} e''$ for some
  cell $K$, then the
  orientations we associate with $e'$ and $e''$ are consistent in $K$.
\end{lemma}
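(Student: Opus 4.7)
The plan is to construct a consistent orientation by propagating an initial choice through the layered structure of Algorithm~\ref{alg:parallel-edges}. First, orient $e$ in an arbitrary way. Then, processing the sets $\Delta_k$ in order of increasing $k$, for each new edge $\varepsilon \in \Delta_k$ added as the $\CellPar{K}$-opposite of an already-oriented parent $\delta \in \Delta_{k-1}$ inside some cell $K$, orient $\varepsilon$ as required by Convention~\ref{conv:3} applied in $K$. Since in a quadrilateral the two opposite edges uniquely determine each other's directions, this gives a well-defined orientation of $\varepsilon$ from any chosen parent.

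To make the global consistency analysis transparent, I would introduce the auxiliary graph $H$ whose vertex set is $\parallelset(e)$ and whose edges join two elements of $\parallelset(e)$ whenever they are $\CellPar{K}$-parallel in a common cell $K$. Each vertex of $H$ has degree at most two, since any edge $e'\in E$ has at most $|K(e')|\le 2$ neighboring cells and exactly one opposite edge in each such cell. Connectivity of $H$ follows from the definition of $\parallelset(e)$ as a $\GlobPar$-equivalence class, so $H$ is either a simple path or a simple cycle.

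If $H$ is a simple path, the propagation above visits every edge exactly once, with no possibility of conflict; this is the case in which the red line of Figure~\ref{fig:parallel-sets} terminates at the boundary on both sides, and the lemma is immediate. The nontrivial case is when $H$ is a simple cycle $e_1, e_2, \ldots, e_m, e_{m+1}=e_1$, with $e_i$ and $e_{i+1}$ parallel inside a cell $K_i$. The question then reduces to showing that transporting the initial orientation once around the cycle returns it to itself.

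This closing-up condition is the main obstacle, and I plan to handle it using the two-dimensional embedding of $\tria$. Because the mesh is planar, cells are convex with positive volume, and successive cells along the cycle share a single edge, the strip $K_1,\ldots,K_m$ forms a topological annulus whose red line is a simple closed Jordan curve in $\R^2$. This curve separates the plane into interior and exterior, providing a globally well-defined transverse direction at each $e_i$. Taking the orientation of each $e_i$ to be the $90^{\circ}$ rotation of this transverse direction yields an assignment that by construction satisfies Convention~\ref{conv:3} in every $K_i$, and therefore agrees with the propagated orientation upon returning to $e_1$. This planarity input is precisely what will be unavailable in 3d, explaining in advance why the same argument does not carry over to the hexahedral case treated in Section~\ref{sec:3d}.
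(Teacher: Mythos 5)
Your reduction to the auxiliary graph $H$ is sound as far as it goes: in 2d every edge has at most two adjacent cells and exactly one opposite edge in each, so $H$ has maximum degree two, is connected by construction of $\parallelset(e)$, and is therefore a path or a cycle; the path case is indeed immediate. The gap is in the cycle case, where you assert that the cells $K_1,\ldots,K_m$ form a topological annulus and that the connecting line is a \emph{simple} closed Jordan curve. That need not be true, because the cells $K_i$ along the cycle need not be distinct: a single cell has two pairs of opposite edges, and both pairs can belong to the same equivalence class $\parallelset(e)$, in which case the connecting line passes through that cell twice (once for each pair) and crosses itself in its interior. This is precisely the situation the paper flags as the hard one --- it describes the curve as ``closed or not and possibly self-intersecting,'' observes that the Jordan Curve Theorem disposes only of the simple closed case, and states that the self-intersecting case required ``a page or more of geometry'' in the authors' attempts. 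Your argument silently assumes simplicity of the curve and therefore does not cover this case; the ``interior/exterior'' transverse direction you rely on is exactly what fails to be globally well defined for a self-intersecting curve.

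It is also worth knowing that the paper's actual proof takes a different route specifically to avoid this difficulty: Lemma~\ref{lemma:existence-for-one-parallel-set} is deferred to the appendix, where a more general statement for quadrilateral meshes on orientable 2-manifolds is proved by triangulating each quadrilateral with a diagonal, invoking the combinatorial-topology fact that a simplicial 2-complex on an orientable surface can be coherently oriented, and then selecting every other induced edge direction along the (possibly cell-repeating) sequence of cells threaded by $\parallelset_i$. That argument never needs the connecting curve to be simple or to bound a region. To repair your approach you would have to either prove the existence of a consistent transverse direction for self-intersecting closed polygonal curves directly, or show that no cell can occur twice in the cycle --- and the paper's own discussion indicates the latter is false in general.
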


This statement can be proven in a variety of ways, both constructively
and in indirect ways. The most intuitive way uses the
fact that a curve in the plane, closed or not and possibly
self-intersecting, such as the dotted line in
Fig.~\ref{fig:parallel-sets}, allows for the definition of a unique direction
``from one side of the curve to the other'', and we can then orient
each edge it crosses according to this direction. This statement appears
obvious on its face. For closed, non-intersecting curves, it
follows from the Jordan Curve Theorem that states that such curves
partition the plane into an ``inside'' and ``outside'' area and we can
then, for example, choose the direction from the inside to the outside to orient
edges. Proving the existence of such a direction field for
self-intersecting curves requires more work. The history of the Jordan
Curve Theorem teaches us that care is necessary, and our variations
on a proof typically required a page or more of geometry, even when
taking into account that we only need to show the statement for the
piecewise linear curves that connect edge midpoints.

Thus, rather than providing such a proof here, let us for now
simply consider the lemma to be true. A proof will later follow from
Remark~\ref{remark:extension-manifold} and a statement in the appendix
where we show a more general statement of which
Lemma~\ref{lemma:existence-for-one-parallel-set} is simply a special case.

Given this statement of feasibility, we can now ask for an
algorithm that assigns directions to all edges in a set
$\parallelset(e)$ and that can be implemented with ${\cal
  O}(|\parallelset(e)|)$ complexity. We do so by extending
Algorithm~\ref{alg:parallel-edges} to include edge orientation
assignment:

\begin{algorithm}[(Orient edges for consistency)]
  \label{alg:orient-edges}
  Let $\pi=\{\parallelset_1,\ldots,\parallelset_{N_\parallelset}\}$ be
  given. Then perform the following operations for each $i=1,\ldots,N_\parallelset$:
    \begin{enumerate}
    \item Assign an ``unassigned'' orientation to each edge $e\in \parallelset_i$.
    \item Choose some $e\in \parallelset_i$ and set $\Delta_0=\{e\}, k=1$.
    \item Assign an arbitrary orientation to $e$.
    \item Set $\Delta_k=\emptyset$.
    \item While $\Delta_{k-1}\neq \emptyset$, do:
      \begin{enumerate}
      \item For each $\delta\in\Delta_{k-1}$:
        \begin{itemize}
        \item Set ${\cal E}(\delta)=\emptyset$.
        \item For each $K\in K(\delta)$:
          \begin{enumerate}
          \item Set ${\cal N}_K(\delta):=\{\varepsilon \in e(K):
            \varepsilon \CellPar{K} \delta \} \backslash \{\delta\}$.
          \item Assign an orientation to the elements of ${\cal N}_K(\delta)$
            that is consistent in $K$ with that of $\delta$.
          \item Set ${\cal E}(\delta) = {\cal E}(\delta) \cup {\cal N}_K(\delta)$.
          \end{enumerate}
        \item Set $\Delta_k = \Delta_k \cup \left({\cal E}(\delta) \backslash\parallelset_k(e))\right)$.
        \end{itemize}
      \item $\parallelset_{k+1}(e) = \parallelset_k(e) \cup \Delta_k$.
      \item Set $k:=k+1$.
      \end{enumerate}
    \end{enumerate}
\end{algorithm}

It is important to note that in step (5)(a)(ii), we assign directions only to
the cell neighbors ${\cal N}_K(\delta)$ of $\delta$, all of which either do
not yet have an orientation, or that have already been assigned
that very same orientation when coming ``from the other side of the parallel set''
in case the connecting line for this set is a closed line through our
mesh. The step therefore never changes an already assigned orientation.

The algorithm above would, in practice, simply be implemented as part of
computing the parallel sets $\parallelset_i$. In that case, it isn't even
necessary to explicitly build $\parallelset_k(e)$ in step (5)(b) as this set
is only used in the last part of step (5)(a) where we could simply exclude all
elements from ${\cal E}(\delta)$ that had previously already been assigned an
orientation.

\subsection{Orienting all edges in a graph}
\label{sec:2d-orienting-graph}
With these results, we can state the main result for the case $d=2$:

\begin{theorem}
  \label{theorem:2d}
  For every planar, undirected graph $G_\tria$ generated by subdividing a bounded
  domain in $\R^2$ into finitely many quadrilaterals, there exists a
  corresponding, consistently oriented directed graph $G_\tria^+$.
\end{theorem}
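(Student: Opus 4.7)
The plan is to assemble the theorem directly from the two pieces developed in Sections~\ref{sec:2d-parallel} and \ref{sec:2d-orienting-ribbon}. First, I would invoke Algorithm~\ref{alg:all-parallel-edges} to partition the edge set $E$ of $G_\tria$ into its equivalence classes of globally parallel edges, $\pi=\{\parallelset_1,\ldots,\parallelset_{N_\parallelset}\}$. This partition is well-defined because, as shown in Section~\ref{sec:2d-parallel}, $\GlobPar$ is an equivalence relation on $E$ and therefore induces a disjoint decomposition $E=\bigcup_{i=1}^{N_\parallelset}\parallelset_i$ with $\parallelset_i\cap\parallelset_j=\emptyset$ for $i\neq j$.

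Next, I would apply Lemma~\ref{lemma:existence-for-one-parallel-set} independently to each class $\parallelset_i$ to obtain an orientation of its edges that is consistent in the sense of Definition~\ref{def:parallel} and Convention~\ref{conv:3} within every cell touched by $\parallelset_i$. Since the classes are disjoint, the union of these individual orientations yields a single orientation of every edge of $G_\tria$, defining the edge set $E^+$ of a candidate directed graph $G_\tria^+=(V,E^+)$.

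It remains to check that $G_\tria^+$ is consistently oriented with respect to every cell $K\in\tria$ in the sense of Convention~\ref{conv:4}. This is where the reformulation of Section~\ref{sec:2d} does the real work: Convention~\ref{conv:4} only requires that within each set of parallel edges of $K$, the two edges be oriented consistently with respect to $K$. Each such pair of parallel edges of $K$ lies, by definition of $\LocPar$ and hence of $\GlobPar$, entirely within a single equivalence class $\parallelset_i$. The consistency of their orientation is therefore precisely what Lemma~\ref{lemma:existence-for-one-parallel-set} guarantees for that class. Since this argument applies to both pairs of parallel edges of $K$ (the pair $\{e(K)_0,e(K)_1\}$ and the pair $\{e(K)_2,e(K)_3\}$, each living in its own equivalence class), $K$ is consistently oriented, and because $K\in\tria$ was arbitrary, so is $G_\tria^+$.

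The only real obstacle in this plan is the per-class existence statement, namely Lemma~\ref{lemma:existence-for-one-parallel-set}; everything else is bookkeeping that follows from the fact that the convention decouples across parallel classes. That obstacle, however, has already been deferred in the excerpt to the appendix via Remark~\ref{remark:extension-manifold}, so I would simply cite the lemma and not attempt to reprove it here.
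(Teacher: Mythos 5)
Your proposal is correct and follows essentially the same route as the paper's own (much terser) proof: partition $E$ into equivalence classes of globally parallel edges, orient each class independently via Lemma~\ref{lemma:existence-for-one-parallel-set}, and observe that Convention~\ref{conv:4} only constrains pairs of cell-parallel edges, each of which lies within a single class. The only slight imprecision is your parenthetical claim that the two parallel pairs of a cell live in distinct equivalence classes --- a self-intersecting connecting line can pass through the same cell in both directions, putting both pairs in one class --- but this does not affect the argument, since the lemma guarantees consistency for every cell-parallel pair within a class.
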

\begin{proof}
  The proof follows from the preceding subsections: first, we can uniquely sort
  all edges into equivalence classes, for which we can choose edge directions
  independently; second, we can find a consistent choice of edge
  directions within each such set.
\end{proof}

Since both sorting edges into equivalence classes and assigning
directions to edges of all equivalence classes are linear in
the number of edges in the equivalence set, the overall algorithm is
${\cal O}(|E|)$ and therefore order optimal in the number of edges.
Furthermore, because each edge is shared by no more than two cells and
because each cell is bounded by exactly four edges, it is obvious that
$\frac 14 |E| \le |{\tria}| \le 2|E|$. It
follows directly that the algorithm is not only linear in the number of
edges, but also in the number of cells (which is the more important estimate
in practice).%
\footnote{In practice, not only the complexity class matters, but also
  the constant in front of it. Our reference implementation in
  \dealii{} orients the edges of the roughly 30,000 cells of the mesh
  shown in the left panel of
  Fig.~\ref{fig:wing} in 0.035 seconds on a current laptop -- far
  faster than solving any equation on this mesh would take.}

\begin{remark}
  \label{remark:extension-manifold}
  The arguments above showing that 2d meshes are always orientable can
  be carried over to meshes on two-dimensional, orientable surfaces,
  and we will prove so in the appendix. In particular, this
  holds for the practically important case of 
  2d meshes covering (part of) the surface of a 3d domain. Indeed,
  this is true whether the domain is homeomorphic to the
  unit ball or not -- i.e., meshes on the surface of 3d domains with
  handles are still orientable.

  The proof of this more general statement then also covers
  Lemma~\ref{lemma:existence-for-one-parallel-set}: the lemma states
  the orientability of edges of a mesh in the plane $\R^2$, which is
  obviously an orientable, two-dimensional manifold. The proof of
  Lemma~\ref{lemma:existence-for-one-parallel-set} thus follows from
  the proof given in the appendix.

  On the other hand, meshes on non-orientable surfaces,
  for example on a M{\"o}bius strip, are not necessarily
  orientable. This is because some of the connecting lines of parallel
  edges (see Fig.~\ref{fig:parallel-sets}) may wrap around the strip
  and return what we thought to be a vector from ``one side to the
  other side'' in its reverse orientation. It is
  therefore not possible to define a unique ``right'' and ``left'' of
  a curve on a non-orientable manifold, and not all sets of edges of a
  mesh defined on it may have a consistent orientation.
\end{remark}

\begin{remark}
  \label{rem:circular-edges}
  As mentioned in a footnote to Section~\ref{sec:introduction} and
  Fig.~\ref{fig:convention}, one can imagine other conventions for the
  relative orientations of edges and cell. For example, for triangles one
  often assumes a circular orientation. Such
  a convention could also be adopted for quadrilaterals. However, it has two
  problems: (i)~It is not obvious how to generalize it to
  hexahedra. (ii) It is easy to construct meshes for which no set of
  globally consistent edge orientations can be found. To see this, note that a
  circular choice for edge directions in each cell implies that opposite edges
  have anti-parallel directions. Then imagine a closed string of $n$
  cells. One of the sets of parallel edges then contains all of the $n$ edges
  that separate the $n$ cells forming a circle. The convention requires
  us to orient them alternatingly -- something that is only possible
  if $n$ happens to be even.
\end{remark}

\section{The three-dimensional case}
\label{sec:3d}

Let us now also look at the case of three spatial dimensions, and a
subdivision of the domain into hexahedral cells. The right panel of
Fig.~\ref{fig:convention} shows the convention for $d=3$, where again we will
want to have all parallel edges point in the same direction. Note that now we
have three sets of four edges within each cell.

We will have to investigate both whether the algorithms discussed in
the previous section always yield a consistent edge orientation, and
what their complexity is given the changed circumstances.

\subsection{Are the edges of 3d meshes always orientable?}
\label{sec:3d-always}

The first step is to construct the sets of parallel edges,
$\pi$. Algorithm~\ref{alg:parallel-edges} again finds all
elements of an equivalence set $\parallelset(e)$ for a given starting
edge $e$. Instead of following a line intersecting
opposite edges starting at $e$ in both directions, we now have to
follow a sheet going through all four parallel edges of a hexahedron.
It is easy to see that again there is a unique classification of all
edges into equivalence sets of parallel edges.

The second step was to assign a consistent orientation to the edges in a set
of parallel edges. This was possible for $d=2$ since the line that connects
all these edges is always orientable. However, that is not the case for the
sheet connecting the edges in $d=3$: It may not be orientable, and in this
case we will not be able to find a consistent orientation for the edges in this
equivalence class because we can no longer choose their directions to
be from ``one side'' to the ``other side'' of the sheet. The following
two sections show examples of meshes whose edges are not orientable according
to our conventions.

\subsubsection*{A first counterexample}

\begin{figure}[tbp]
  \begin{center}
    \includegraphics[width=.65\textwidth]{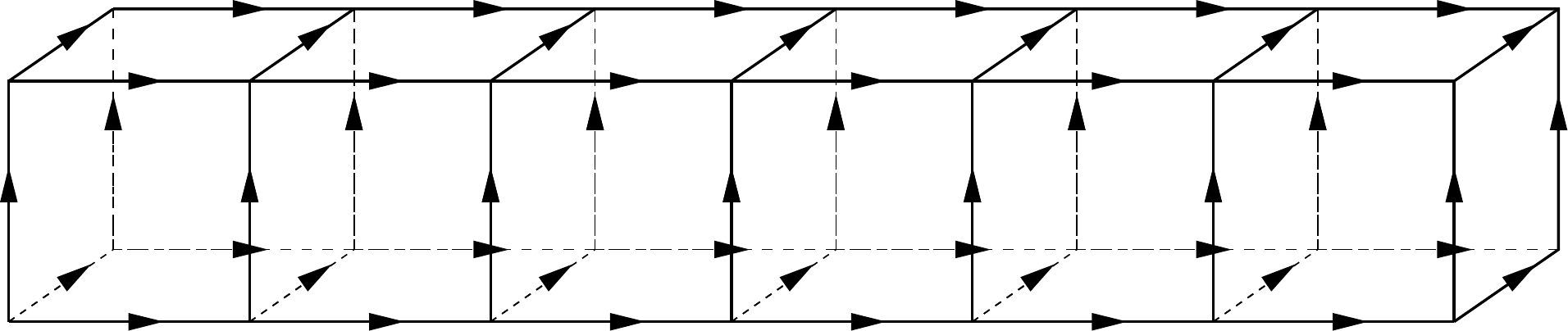}
    \\[6pt]
    \phantom{.}
    \hfill
    \includegraphics[width=.4\textwidth]{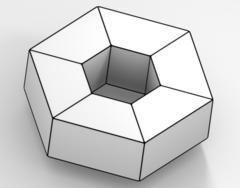}
    \hfill
    \includegraphics[width=.4\textwidth]{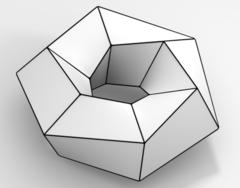}
    \hfill
    \phantom{.}
    \\[6pt]
    \phantom{.}
    \hfill
    \includegraphics[width=0.4\textwidth]{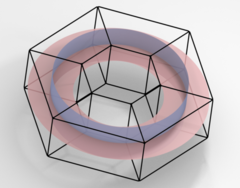}
    \hfill
    \includegraphics[width=0.4\textwidth]{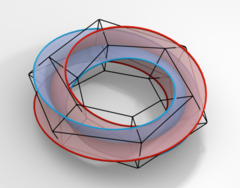}
    \hfill
    \phantom{.}
    \caption{\it First counterexample for $d=3$. Taking a string of cells
      (top) and bending them into a torus yields a graph for which a
      consistent orientation of edges exists (center left). However, twisting
      the cells by $180^\circ$ before re-attaching ends fails to yield such a
      graph (center right): The sheets connecting the radial and axial
      parallel edges form two intersecting,
      non-orientable M\"obius strips (bottom).}
    \label{fig:torus}
  \end{center}
\end{figure}

A first non-orientable example is shown in Fig.~\ref{fig:torus},
demonstrating a subdivision of a toroidal domain for which no
consistent edge
orientation exists. If we take a string of cells (top) and bend it into a
torus (bottom left), then all edges can be grouped into one radial, one axial,
and $|{\tria}|$ tangential classes (the surfaces connecting the edges of
the first two classes are shown in the bottom row of Fig.~\ref{fig:torus}).
One possible consistent orientation would be radially inward and axially into
positive $z$-direction. The tangential edges can be oriented arbitrarily for
each cell separately. However, such a consistent choice of direction for edges
no longer exists if we twist the string of cells by $180^\circ$ before
attaching ends (bottom right). In that case, there must be at least one cell
with radial edges that both point inward and outward, in violation of our
convention. The same holds for a twist of $90^\circ$, in which case the sheet
connecting parallel edges has to circle the torus twice, before meeting itself
in the wrong orientation again. The sheet that passes through parallel edges
is, in these cases, a M\"obius strip with either a twist of $180^\circ$ or
$90^\circ$, and it is well known that this surface is not orientable.

\subsubsection*{A second counterexample}

Conjecturing from the first example that triangulations into hexahedra with no
consistent orientation must have a hole or be multiply connected, is wrong,
though. Fig.~\ref{fig:rainers-example} shows an example of 14 hexahedra
subdividing a simply connected domain without holes for which no consistent
orientation exists. The bottom row of the figure shows the top face of the seven
lower hexahedra (left) and the bottom face of the upper seven (right). Only
faces $A$ and $B$ match and are connected.

\begin{figure}[tbp]
    \begin{center}
    \phantom{.}
    \hfill
    \includegraphics[width=.44\textwidth]{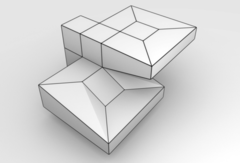}
    \hfill
    \includegraphics[width=.44\textwidth]{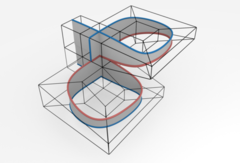}
    \hfill
    \phantom{.}
    \\
    \phantom{.}
    \hfill
    \includegraphics[height=.17\textwidth]{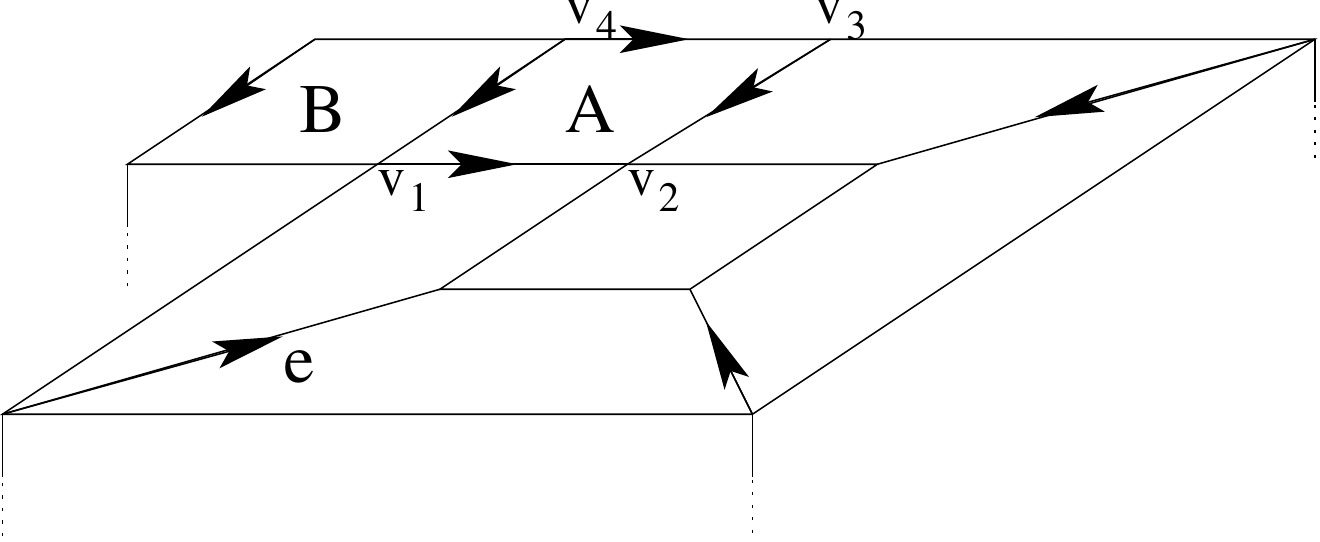}
    \hfill
    \includegraphics[height=.17\textwidth]{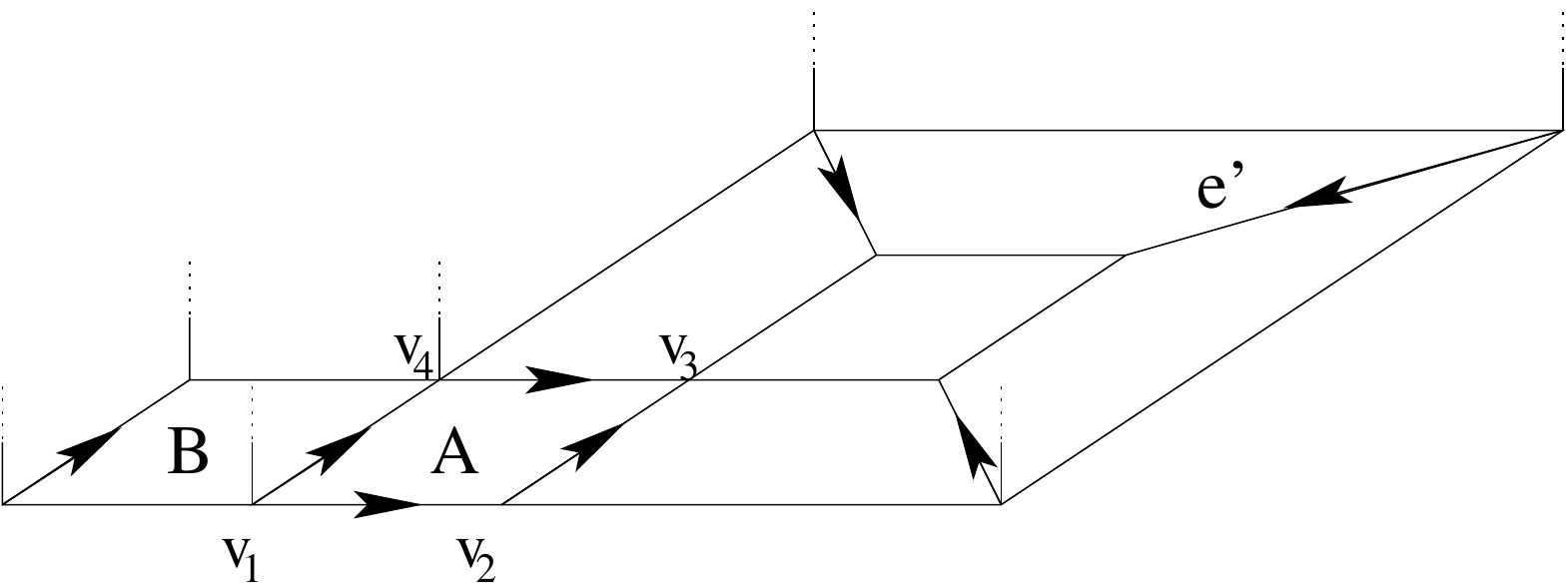}
    \hfill
    \phantom{.}
    \caption{\it Second counterexample for $d=3$ with a simply connected
    domain. Top row: Hexahedralization and one of the sheets passing through
    parallel edges. Bottom row: Top face of lower half of domain and
    bottom face of upper part of domain, both showing consistent directions of
    edges if one oriented the edge $(v_4,v_3)$ as shown. This leads
    to conflicting directions for $(v_4,v_1)$, and the same happens
    if one had oriented $(v_4,v_3)$ in the opposite direction. Note that the
    two parts shown are connected
    only on faces $A$ and $B$.}
    \label{fig:rainers-example}
  \end{center}
\end{figure}

Let us consider the orientation of the radial edges of the lower left
picture: starting, for example, at edge $e$, then all radial edges
must either point inward or outward due to the opposite edge
rule. Also, the direction of the other two edges of face $A$ is then
fixed. One of the two possibilities for directions of these edges is shown in
the bottom left part of Fig.~\ref{fig:rainers-example}. Independent of these
two possible choices, we have the \textit{invariant} that $v_2$ and
$v_4$ are vertices to which both lines in $A$ either converge, or from
which they emanate, and $v_1$ and $v_3$ are vertices into which one
line enters and from which one line emerges.

Now let us consider the underside of the upper seven hexahedra, given by
the lower right figure. It is connected to the lower part along faces
$A$ and $B$. By the same argument, starting for example at
edge $e'$, the directions of the radial edges and the other two edges
of face $A$ are fixed. However, this time vertices $v_1$ and $v_3$
are the vertices from which both adjacent edges in $A$ emerge and into
which they vanish, and $v_2$ and $v_4$ are the ones through which
they pass, i.e.~the character of the vertices is changed. Since in the
joint domain vertices and edges of the upper- and undersides are
identified, this creates a conflict: whatever direction we choose for
edges $e$ and $e'$, no consistent edge orientation of the face $A$
is possible.

This is easily understood by looking at the sheet that connects the
set of parallel edges, $\parallelset(e)=\parallelset(e')$,
shown in the upper right part of the figure. It is a split,
self-intersecting sheet that is not orientable.

\subsection{Algorithm complexity}

The examples in the previous subsection show that not all 3d meshes
allow for a consistent edge orientation. In a sense, the complexity of
our algorithm may therefore be a moot point: codes do necessarily have
to store explicit edge orientation flags for each cell because the
orientation of edges can no longer be inferred from the orientation of
the cell.

On the other hand, it may still be of interest to orient edges
consistently for those meshes for which this is possible, for example
to ensure that the code paths for default edge orientation are always
chosen. We may also be interested in seeing whether our algorithm can
at least detect in optimal complexity whether a mesh is orientable. 

To investigate these questions, we note that
Algorithms~\ref{alg:parallel-edges} and \ref{alg:all-parallel-edges}
separating edges into the set $\pi$ of equivalence classes continue to
work. Algorithm~\ref{alg:parallel-edges} determines the overall
complexity. In 3d, we need to note that in step (2)(b)(i), ${\cal
  N}_K(\delta)$, the set of edges parallel to $\delta$ in cell $K$,
now has cardinality 3. Furthermore, the loop in step (2)(b) now
iterates over all cell neighbors $K(\delta)$, of which in 3d there may
in fact be up to $|{\tria}|$. Consequently, we can no longer bound
$|{\cal E}(\delta)|$ by a constant independent of the number of edges
or cells, and this then applies to the cost of the entire step
(2)(b). This would not be a problem if we added at least a fixed
fraction of the elements of ${\cal E}(\delta)$ to $\Delta_k$ (i.e., if
we could guarantee that $\frac{|{\cal E}(\delta)\backslash
  \parallelset_k(e)|}{|{\cal E}(\delta)|} \ge c >0$), but we are not
aware of any theoretical argument that this would be so.
Consequently, it is conceivable that there exist sequences of meshes
for which $|{\cal E}(\delta)|={\cal
  O}(|E|)$ but for which step (2)(c) adds only a fixed number of
elements to $\parallelset_k$ (or at least a number that grows less
quickly than $|E|$). This would destroy the linear complexity
of the overall algorithm.

From a practical perspective, however, this question is not terribly
interesting as it requires meshes in which the number of cells
adjacent to individual edges becomes very large. While the optimal
number of cells adjacent to each edge would be four (for example in a
cubic lattice), practical meshes generated by mesh generators rarely
have more than 8 or 10 such adjacent cells per edge, and this number
is independent of the number of cells. Consequently, for
such meshes, $|{\cal E}(\delta)|={\cal O}(1)$ and the algorithm is
then again guaranteed to run with optimal complexity. We believe that
it would also be possible to reformulate the algorithms in ways that
allow for optimal complexity even for these pathological cases, though
this is of no interest in the current paper.

Finally, we note that when a mesh is not orientable, step (5)(a)(ii) of
Algorithm~\ref{alg:orient-edges} will eventually try to assign a direction to
an edge that already has an orientation that is inconsistent with the one that
we want to assign to it (a case that cannot happen in 2d). Thus, the algorithm
will be able to detect non-orientable meshes with the same run time complexity
as it can orient meshes.

\section{Meshes in $d=3$ that are always orientable}
\label{sec:3d-always-orientable}

The fact that not all 3d meshes can be consistently oriented of course does
not rule out that there may be important subclasses of 3d meshes that can in
fact be oriented. Indeed, we will show these statements in the
following subsections:
\begin{enumerate}
\item Refining a non-orientable mesh uniformly yields a mesh that is
  consistently orientable.
\item Three-dimensional meshes generated by extruding a
  two-dimensional mesh into a third direction are always orientable.
\item Hexahedral meshes that result from subdividing each tetrahedron of a
  tetrahedral mesh into hexahedra are always orientable.
\end{enumerate}
We will discuss these three cases in the following.

\subsection{Meshes originating from refinement of non-orientable meshes} 

If one is given a three-dimensional mesh with a sheet of parallel edges that
are not consistently orientable, then it turns out that we can generate 
an orientable mesh by subdividing all the cells along this sheet. To
understand this intuitively, remember that a 
non-orientable surface somehow connects to itself ``with a twist of
$180^\circ$''. If we refine all the cells along its way, we duplicate this
sheet and replace it by one that ``goes around twice before connecting to
itself again''. The twist is thus $360^\circ$ and the resulting sheet is
orientable. This is illustrated in Fig.~\ref{fig:doubling} for the two
examples discussed in Section~\ref{sec:3d-always}.

\begin{figure}[tbp]
  \begin{center}
    \phantom{.}
    \hfill
    \includegraphics[width=.4\textwidth]{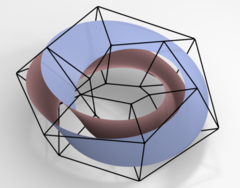}
    \hfill
    \includegraphics[width=.4\textwidth]{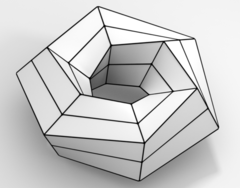}\\
    \hfill
    \phantom{.}
    \\
    \includegraphics[width=.4\textwidth]{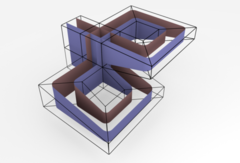}
    \caption{\it Illustration that splitting a non-orientable sheet by
      subdividing each edge along the sheet into two, yields a single
      orientable sheet. Top left: The sheet that results from subdividing
      one of the two M{\"o}bius strips in Fig.~\ref{fig:torus}. Top right: The mesh one obtains by refining all edges along both of
      the non-orientable sheets in Fig.~\ref{fig:torus}. This mesh is
      orientable. Bottom: The
      same for the example shown in Fig.~\ref{fig:rainers-example}.}
    \label{fig:doubling}
  \end{center}
\end{figure}

To make this more formal, let us consider an equivalence class
$\parallelset_{G_\tria}(e)$ of parallel edges which is not consistently orientable.
Here, we use the index to indicate that this is a set of edges in the graph
$G_\tria$. Now split each cell that the sheet associated with $\parallelset_{G_\tria}(e)$
intersects, into 2, 4, or 8 new cells (depending on whether the sheet
intersects the cell once, twice, or three times, i.e., whether one, two, or
all three of the sets of parallel edges within this cell are part of
$\parallelset_{G_\tria}(e)$). We subdivide in the
directions orthogonal to the sheet, see
Figure~\ref{fig:refinement-cases}. This removes the affected edges
from the graph, and replaces them by 8, 24, or 54 new ``child'' edges. With this
refined mesh is associated a new graph $G'$. Then the following results hold:

\begin{figure}[tbp]
  \begin{center}
    \phantom{.}
    \hfill
    \includegraphics[width=.24\textwidth]{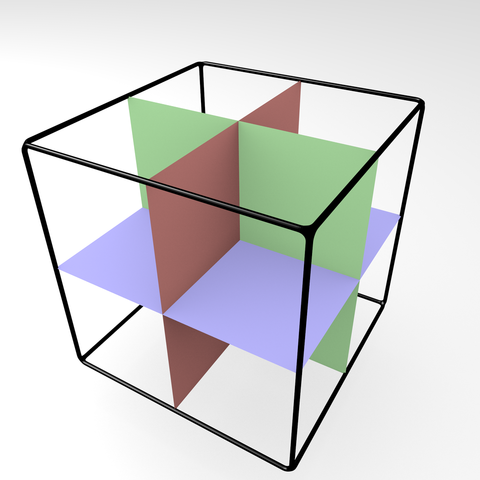}
    \hfill
    \includegraphics[width=.24\textwidth]{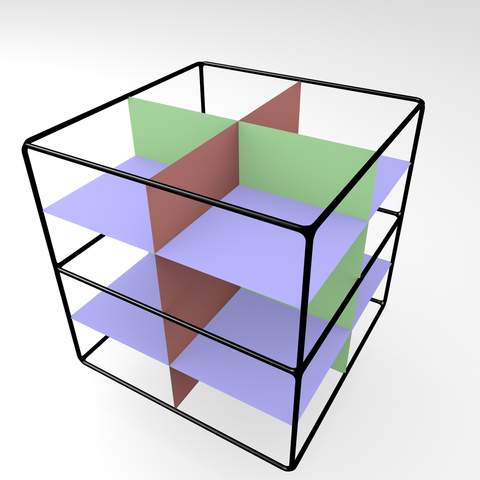}
    \hfill
    \includegraphics[width=.24\textwidth]{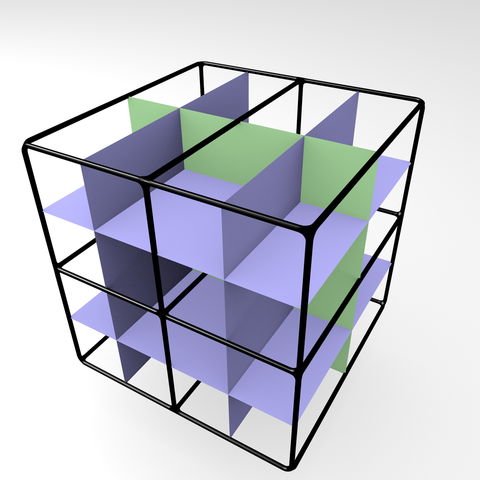}
    \hfill
    \includegraphics[width=.24\textwidth]{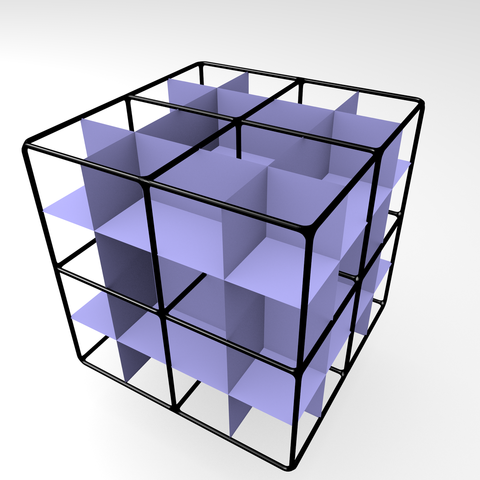}
    \hfill
    \phantom{.}
    \caption{\it Illustration of splitting a cell (left) along the purple
      connecting sheet (left center), as well as along the purple
      sheet in cases where it runs through the same cell twice (center right) or
      three times (right).}
    \label{fig:refinement-cases}
  \end{center}
\end{figure}

\begin{lemma}
  \label{lemma:refinement}
  Let $\Pi_{G_\tria}(e)$ be a set of parallel edges that are not consistently
  orientable, and $\Pi_{G'}(e')$, $\Pi_{G'}(e'')$ the sets of parallel edges
  in the refined graph $G'$ associated with the ``children'' $e',e''$ of
  $e$. Then $\Pi_{G'}(e')=\Pi_{G'}(e'')=:\Pi$. Furthermore, $\Pi$ is orientable.
\end{lemma}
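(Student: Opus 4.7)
The plan is to exploit the topological interpretation of refinement as a \emph{doubling} of the non-orientable sheet $S$ associated with $\parallelset_{G_\tria}(e)$, and then to convert this topological picture back into graph-theoretic language. First I would spell out the local effect of refinement: when a cell $K$ is split by the piece of $S$ inside $K$, each parent edge of $K$ lying in $\parallelset_{G_\tria}(e)$ is split into two collinear children on opposite sides of that piece, and new middle edges are created along the cut. In each of the two resulting child cells the cell-parallel structure is inherited from the parent, so every such parent edge has a well-defined upper and lower child; these children are cell-parallel (in their child cell) to the upper, respectively lower, children of the other parent-parallel edges of $K$.

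For the first claim $\parallelset_{G'}(e')=\parallelset_{G'}(e'')$, I would use the non-orientability of $S$ directly. By assumption, Algorithm~\ref{alg:orient-edges} fails on $\parallelset_{G_\tria}(e)$, which means there is a chain of locally parallel edges $e=f_0\LocPar f_1 \LocPar \ldots \LocPar f_m=e$ in $G_\tria$, coming from a closed walk of cells $K_1,\ldots,K_m$ along $S$, for which a coherent choice of ``side of $S$'' cannot be maintained: starting on one side, one returns to $e$ from the opposite side. Lifting this chain into $G'$ by always taking at each step the child of $f_{i+1}$ that lies on the same side of $S$ as the current child of $f_i$, every lifted step is still a local-parallel relation in $G'$, and after $m$ steps the lift that started at $e'$ terminates at $e''$. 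Hence $e' \GlobPar e''$ in $G'$, so both children lie in the same equivalence class $\parallelset$.

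For the second claim, that $\parallelset$ is orientable, the refined sheet $S'$ associated with $\parallelset$ is the orientation double cover of $S$: it is obtained by taking one copy of the cell structure along $S$ on each side and gluing these copies exactly along the orientation-reversing loops used above. Since a connected non-orientable 2-manifold has a connected orientable double cover, $S'$ is orientable. Consistency of edge orientations within $\parallelset$ then follows from the orientable-manifold version of Lemma~\ref{lemma:existence-for-one-parallel-set} provided in the appendix, applied to $S'$. The main obstacle I anticipate is translating the clean topological language -- double cover, transverse orientation, orientation-reversing loop -- into the purely combinatorial framework used here; in particular, one must show that the lift from the previous paragraph both connects every pair of cross-sheet children (connectedness of $S'$) and never produces more than two copies of any cell of $S$ (so that the cover is genuinely 2-to-1), so that the appendix result can be invoked cleanly.
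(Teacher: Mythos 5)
Your argument for the first claim is correct but takes a genuinely different route from the paper's. You extract from non-orientability a closed, side-reversing chain of locally parallel edges along the sheet $S$ and lift it to $G'$ to connect $e'$ with $e''$ directly. The paper instead argues by contraposition: if $\parallelset_{G'}(e')\neq\parallelset_{G'}(e'')$, then assigning to each parent edge $\varepsilon\in\parallelset_{G_\tria}(e)$ the direction ``from its child in $\parallelset_{G'}(e')$ to its child in $\parallelset_{G'}(e'')$'' would be a consistent orientation of $\parallelset_{G_\tria}(e)$, contradicting the hypothesis. The two are logically equivalent, but the paper's version is shorter and avoids having to prove that non-orientability produces a side-reversing loop (a fact which is essentially the same contrapositive in disguise) and needs no combinatorial notion of ``side of $S$'' at all.

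For the second claim there is a genuine gap. You identify the refined sheet $S'$ with the orientation double cover of $S$ and conclude that $S'$ is orientable as an abstract surface; but what the lemma needs is a consistent \emph{transverse} direction for the edges of $\parallelset$ crossing $S'$, i.e., a co-orientation, and Lemma~\ref{lemma:existence-for-one-parallel-set} and its appendix generalization do not deliver that: they orient the edges of a quadrilateral mesh lying \emph{on} an orientable surface, not the edges of a 3d mesh piercing a sheet transversally. Passing from ``orientable'' to ``two-sided'' uses orientability of the ambient domain and is precisely the combinatorial translation you yourself flag as the anticipated obstacle --- so the decisive step is left open, and the connectedness and 2-to-1 verifications you list would add further work. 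The paper closes this step with a purely local construction that makes the double-cover apparatus unnecessary: orient both children $\varepsilon',\varepsilon''$ of each $\varepsilon\in\parallelset_{G_\tria}(e)$ \emph{away from their common midpoint vertex} (a ``centrifugal'' direction rooted in the old sheet). One checks against Fig.~\ref{fig:refinement-cases} that this is consistent in every child cell, hence in every cell the sheet intersects; it requires neither the first claim nor any global property of $S'$. I would replace the appeal to the appendix with this one-line local argument.
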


\begin{proof}
  By refining each cell with edges in $\Pi_{G_\tria}(e)$ along the non-orientable
  sheet, we replace each edge $\varepsilon_i\in\Pi_{G_\tria}(e)$ by two edges
  $\varepsilon_i',\varepsilon_i''$. Let us select an arbitrarily chosen
  element from $\Pi_{G_\tria}(e)$, and denote it for simplicity by the symbol $e$. Its
  children, after refinement, $e',e''$ are edges in the refined graph $G'$,
  and their respective sets of parallel edges are
  $\Pi_{G'}(e'),\Pi_{G'}(e'')$. It is easy to see that each of the two
  children $\varepsilon',\varepsilon''$ of each edge $\varepsilon\in\Pi_{G_\tria}(e)$
  must be in either $\Pi_{G'}(e')$ or $\Pi_{G'}(e'')$.

  We first show that these sets are equal. Assume that
  $\Pi_{G'}(e')\neq\Pi_{G'}(e'')$. Then we could find a unique direction for
  each edge $\varepsilon\in\Pi_{G_\tria}(e)$ by assigning it the direction
  ``from the $\Pi_{G'}(e')$ side to the $\Pi_{G'}(e'')$ side.'' This, however,
  would be a consistent orientation of the edges in $\Pi_{G_\tria}(e)$, in
  contradiction to the assumption that this set is non-orientable. Thus,
  $\Pi_{G'}(e')=\Pi_{G'}(e'')$. Intuitively, this means that splitting the
  single sheet associated with $\Pi_{G_\tria}(e)$ still yields a single sheet.
  
  The second step is to show that the single set
  $\Pi=\Pi_{G'}(e')=\Pi_{G'}(e'')$ is orientable. We do this purely locally:
  choose for the two children $\varepsilon',\varepsilon''$ of an edge
  $\varepsilon\in\Pi_{G_\tria}(e)$ the direction away from their common node
  (``centrifugal direction'': the direction vectors of the child edges are
  ``rooted'' in the original sheet through $\Pi_{G_\tria}(e)$). This orientation of the children of
  $\Pi_{G_\tria}(e)$ is consistent with our convention for every one of the affected child cells,
  and, consequently, for all cells which the sheet for $\Pi_{G_\tria}(e)$
  intersects. Intuitively, this can be interpreted as follows: while
  the sheet has no associated normal direction, the direction ``away from the
  sheet'' exists on both sides.
\end{proof}

As can be seen in Fig.~\ref{fig:refinement-cases}, refinement of a cell along one sheet
also adds new edges to other sets of parallel edges. Depending on whether the
sheet intersects a cell once, twice, or three times, refinement adds 4, 5, or
no new edges to other parallel sets. However, we can show that refining the
cells along one non-orientable sheet does not render a previously orientable,
other sheet non-orientable:

\begin{lemma}
  \label{lemma:other-sets}
  Let $\Pi_{G_\tria}(e)$ be a set of parallel edges that are refined to make it
  orientable. Let $\Pi_{G_\tria}(e')$ be a set of parallel edges to which edges are
  added by this refinement step. Then, if $\Pi_{G_\tria}(e')$ was orientable before
  the refinement step, then it is also after the refinement step.
\end{lemma}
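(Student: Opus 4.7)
The plan is to take the assumed consistent orientation $\vec d$ of $\Pi_{G_\tria}(e')$ and extend it explicitly to $\Pi_{G'}(e')$. Since $\Pi_{G_\tria}(e)$ and $\Pi_{G_\tria}(e')$ are distinct, and hence disjoint, equivalence classes, no edge of $\Pi_{G_\tria}(e')$ lies on the refinement sheet of $\Pi_{G_\tria}(e)$; in particular no such edge is split by the refinement. All edges of $\Pi_{G_\tria}(e')$ therefore survive in $G'$ and I would give them the same directions they had under $\vec d$. What remains is to decide on a direction for the new ``middle'' edges that the refinement introduces and that land in $\Pi_{G'}(e')$.

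First I would classify these new edges locally. For a refined cell $K$ that contributes edges to $\Pi_{G'}(e')$, exactly one of the three direction classes of $K$ lies in $\Pi_{G_\tria}(e')$; call its four edges $\varepsilon^K_1,\ldots,\varepsilon^K_4$. The new middle edges of $K$ that end up in $\Pi_{G'}(e')$ are precisely those that, in the children of $K$, are locally parallel to the $\varepsilon^K_i$, because local parallelism in child cells links them into the same global equivalence class. I would assign each such middle edge the common direction of $\varepsilon^K_1,\ldots,\varepsilon^K_4$, which is well-defined since $\vec d$ satisfies Convention~\ref{conv:4} on $K$.

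Next I would verify that this extended assignment satisfies Convention~\ref{conv:4} on every cell of the refined triangulation. For a cell that is not refined, nothing has changed. For a child cell $K^\star$ of a refined cell $K$, the edges of $e(K^\star)\cap\Pi_{G'}(e')$ all lie in a single direction class of $K^\star$, consisting of pieces of the $\varepsilon^K_i$ together with newly introduced middle edges, all pointing in the common direction inherited from $\vec d$ on $K$, so the convention holds. When a new middle edge $m$ is shared between two refined parent cells $K$ and $L$, its direction must be inherited consistently from both sides; this is so because the $\Pi_{G_\tria}(e')$-edges of $K$ and of $L$ on the face shared by $K$ and $L$ are literally the same edges with the same direction under $\vec d$, so the direction picked up from $K$ equals the one picked up from $L$.

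The main obstacle I anticipate is the bookkeeping for the multi-cut cases depicted in the right two panels of Fig.~\ref{fig:refinement-cases}, where the sheet of $\Pi_{G_\tria}(e)$ passes through two or three direction classes of the same cell $K$, producing up to eight child cells and additional middle edges, including the edge along the intersection of two cutting planes. A careful case analysis is needed to confirm that every new edge joining $\Pi_{G'}(e')$ in these situations still lies in the single direction class of $K$ that belongs to $\Pi_{G_\tria}(e')$, so that the same inheritance rule applies; once that local classification is settled, the per-child-cell and across-shared-face consistency arguments reduce to the single-cut case already handled above.
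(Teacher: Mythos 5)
Your proposal is correct and follows essentially the same route as the paper's proof: keep the directions of the surviving edges of $\Pi_{G_\tria}(e')$ (none of which are split, since the classes are disjoint), give the newly created middle edges the common parallel direction already present in the refined parent cell, and observe that this is consistent in all child cells while the remaining cells are untouched. Your version is more explicit than the paper's about the cross-parent agreement on shared middle edges and the multi-cut bookkeeping, but these are elaborations of the same argument rather than a different one.
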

\begin{proof}
  If the edges in $\Pi_{G_\tria}(e')$ were already orientable in $G_\tria$, then they must
  have parallel directions in the cell shown in Fig.~\ref{fig:refinement-cases}. If
  we assign the same, parallel direction to the newly added edges in $G'$ that
  belong to this set of parallel edges in $G'$, then the resulting directions
  are consistent in the child cells as well. Because the remainder of the
  cells intersected by $\Pi_{G_\tria}(e')$ are not affected, the locally consistent
  edge orientations in the child cells implies that $\Pi_{G'}(e')$ remains
  orientable.

  On the other hand, if $\Pi_{G_\tria}(e')$ was not orientable, then this fact is
  unaltered by the addition of new edges.
\end{proof}

With these lemmas, we can state the final result of this section:

\begin{theorem}
  Let $G_\tria$ be the graph associated with a subdivision of a domain into
  hexahedra. If it is not orientable, then we can generate a new graph $G'$ by
  refining all cells exactly once along the sheets associated with those sets
  of parallel edges that are not orientable.
\end{theorem}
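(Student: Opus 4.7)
The plan is to combine Lemmas~\ref{lemma:refinement} and \ref{lemma:other-sets} applied to a simultaneous refinement of $G_\tria$ along all of its non-orientable sheets, and conclude that the resulting graph $G'$ is consistently orientable. First I would enumerate the equivalence classes of parallel edges in $G_\tria$, splitting them into the non-orientable classes $\Pi_{i_1},\ldots,\Pi_{i_m}$ and the orientable remainder. Then I would describe $G'$ precisely: a hexahedron of $G_\tria$ is split into $2^k$ children, where $k\in\{0,1,2,3\}$ is the number of non-orientable sheets passing through it, following the patterns of Fig.~\ref{fig:refinement-cases}. Because Algorithm~\ref{alg:parallel-edges} applied in $G_\tria$ unambiguously identifies each $\Pi_{i_k}$, and because each cell sees only finitely many sheets, $G'$ is well-defined.

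Next I would orient the children of edges of each $\Pi_{i_k}$ by the centrifugal rule of Lemma~\ref{lemma:refinement}: for each $\varepsilon\in\Pi_{i_k}$, its two children $\varepsilon',\varepsilon''$ point away from their shared midpoint. Since this rule is local to $\varepsilon$, it is well-defined even if the cells adjacent to $\varepsilon$ are simultaneously being split along other non-orientable sheets. The proof of Lemma~\ref{lemma:refinement} then shows that in $G'$ the two children $e',e''$ of a representative $e\in\Pi_{i_k}$ in fact generate the same parallel class $\Pi_{G'}$, and that this class is consistently orientable.

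Then I would handle the originally orientable classes. For each such $\Pi_{G_\tria}(e')$, Lemma~\ref{lemma:other-sets} shows that any edges added to it by refinement can inherit the parent edge's direction, so it remains orientable in $G'$. This leaves the edges newly introduced by refinement that do not come from parents in $G_\tria$, namely the short interior edges separating the $2^k$ children of a refined cell. These lie in new parallel classes; I would verify by direct local inspection of the three configurations in Fig.~\ref{fig:refinement-cases} that each such new class is a closed strip traversing only refined cells along which the centrifugal rule already fixes consistent orientations, and that this inherited orientation agrees on each child cell. Assembling the centrifugal orientations on refined non-orientable sheets, the extended orientations on originally orientable sheets, and the inherited orientations on the newly created interior classes yields a globally consistent orientation of $G'$, proving the theorem.

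The main obstacle I expect is the bookkeeping when two or three non-orientable sheets meet inside the same cell (the middle and right configurations of Fig.~\ref{fig:refinement-cases}). There the centrifugal orientations coming from different sheets must simultaneously respect Convention~\ref{conv:3} on each of the up to eight child cells; Lemmas~\ref{lemma:refinement} and \ref{lemma:other-sets} are each stated for a single refining sheet, so this compatibility is the one part of the argument not already subsumed by them. By the symmetry of the refinement templates, however, it suffices to check this on a single child cell, after which the global consistency follows.
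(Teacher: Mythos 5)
You take a genuinely different route from the paper: you refine along all non-orientable sheets simultaneously and then try to verify consistency of the resulting centrifugal orientations cell by cell. The paper instead argues by induction on the number of non-orientable parallel sets: refine along \emph{one} such sheet, invoke Lemma~\ref{lemma:refinement} to make that set orientable and Lemma~\ref{lemma:other-sets} to see that no orientable set becomes non-orientable (and that the remaining non-orientable sets stay non-orientable, so the step can be repeated), and conclude after finitely many steps since each step reduces the number of non-orientable sets by one. That sequential formulation is precisely what allows the two lemmas to be applied exactly as stated, for a single refining sheet at a time, and it never has to reconcile two centrifugal orientations inside the same cell.

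The difficulty you flag in your last paragraph --- that when two or three non-orientable sheets pass through the same cell, the centrifugal directions coming from the different sheets must simultaneously satisfy Convention~\ref{conv:3} on each of the up to eight children --- is therefore not a peripheral bookkeeping issue in your version of the argument; it is the step that replaces the paper's induction, and you do not carry it out. Asserting that ``by symmetry it suffices to check a single child cell'' does not close it: you would need to verify, for the middle and right configurations of Fig.~\ref{fig:refinement-cases}, that each of a child cell's three parallel classes is consistently oriented when each class independently receives its own centrifugal direction, and that this persists across faces shared with neighboring refined cells. Until that check is written down, or the argument is restructured as the one-sheet-at-a-time induction, the proof is incomplete. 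A second, smaller error: the edges created on the new interior faces do not form ``new parallel classes.'' Inside each child cell $K'$ such an edge is $\CellPar{K'}$-parallel to a surviving child of an original edge, so it joins an already existing class; this is exactly the situation Lemma~\ref{lemma:other-sets} handles, and there is no separate class of ``short interior edges'' left to orient.
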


\begin{proof}
  The theorem follows from the previous two lemmas. As was shown in
  Lemma~\ref{lemma:refinement}, we can convert a non-orientable set of
  parallel edges into an orientable one. In Lemma~\ref{lemma:other-sets}, we
  showed that this does not create new non-orientable sets. Since the original
  graph can only have finitely many non-orientable sets of parallel edges, we decrease this
  number by one in each refinement step and thus obtain an orientable graph in
  a finite number of steps.
\end{proof}

Thus, even though there are three-dimensional meshes that cannot be
oriented according to our convention, above theorem shows that there is a
simple and inexpensive remedy for these cases. While the meshes resulting from
such anisotropic refinement have a worse aspect ratio, one can
simply refine \textit{all} hexahedra uniformly into eight 
children to obtain an orientable mesh with the same aspect ratio of cells as
the original one. This corresponds to refining the cells intersected by \textit{all}
sheets associated with sets of parallel edges, not only those corresponding to
non-orientable ones. Obviously, this mesh is also orientable: in addition to
the originally non-orientable parallel sets, we now also refine the originally
orientable parallel sets, which yields two distinct sets of parallel edges
$\Pi_{G'}(e')$ and $\Pi_{G'}(e'')$ that can independently be oriented, one on
``this'' side of the original sheet and one on ``that'' side of it (these
sides are distinct because the sheet associated with $\Pi_{G_\tria}(e)$ was
orientable).

\begin{remark}
  The observations of this section also point out an important
  optimization in practice for codes that create finer
  meshes by subdividing the cells of an existing mesh. In such cases, it is not
  necessary to re-run the mesh orientation algorithm on the finer mesh
  with four (2d) or eight (3d) times as many cells. Rather, if the
  original mesh was already consistently oriented, then the new mesh
  will be consistently oriented by simply choosing the directions of
  the refined edges to be the same as those of their parent edge.
\end{remark}

\subsection{Extruded meshes} 
\label{sec:3d-extrude}

An important class of meshes consists of those that start with a
two-dimensional quadrilateral mesh and then ``extrude'' it into a third direction by
replicating it one or more times and connecting the vertices of the
original mesh and its replicas in this third direction. Indeed, the
mesh shown at the top of Fig.~\ref{fig:torus} is such a mesh: the
sequence of quadrilaterals at the bottom has been replicated to the
top, and each pair of original and replicated vertices are connected
by a new edge.

Extruded meshes are often used for ``thin'' domains. The
technique is obviously also applicable if the original two-dimensional
mesh lived on an (orientable) manifold such as the bottom surface of
the object we want to mesh. The extrusion also need not necessarily be
in a perpendicular direction, nor do the replicas have to be parallel
to the original mesh. That said, for the purposes of orienting edges,
such geometric considerations are immaterial. 

For extruded meshes, we can state the following result:
\begin{theorem}
  \label{theorem:extruded-always}
  Let $\{K\}$ be a hexahedral mesh obtained by extruding a
  quadrilateral mesh defined on a two-dimensional, orientable manifold
  in a third direction. Then the
  edges of this mesh are consistently orientable.
\end{theorem}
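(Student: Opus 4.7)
The plan is to reduce the three-dimensional orientation problem to the two-dimensional one by exploiting the product structure of the mesh. Let $\tria$ be the extrusion of a base quadrilateral mesh $\tria_0$ on an orientable surface into $n$ layered slabs, yielding $n+1$ copies $L_0,\ldots,L_n$ of $\tria_0$. Every edge of $G_\tria$ is then of one of two types: an \emph{extrusion edge}, connecting a vertex of $L_i$ to its counterpart in $L_{i+1}$; or a \emph{horizontal edge}, lying entirely in some $L_i$ and coming from an edge of $\tria_0$.

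First, I would describe the global parallel classes. Within a hexahedron $K$ obtained by extruding a base quad $\tilde K$ between $L_i$ and $L_{i+1}$, the four extrusion edges form one local parallel class, while the remaining eight horizontal edges split into two local classes of four, each consisting of the two 2D-parallel edges of $\tilde K$ taken in both $L_i$ and $L_{i+1}$. Propagating $\LocPar$ cell by cell, I would conclude that every global class is of one of two kinds: an extrusion class, consisting of all extrusion edges in a single slab between two consecutive layers; or a horizontal class $\Pi_{\tilde\Pi}$, consisting of the copies in every layer of the edges of some 2D equivalence class $\tilde\Pi$ of $\tria_0$.

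Orienting these classes is then straightforward. Every extrusion edge can be oriented uniformly in the direction from $L_i$ to $L_{i+1}$, which is trivially consistent on every cell's extrusion class. For each horizontal class $\Pi_{\tilde\Pi}$, Theorem~\ref{theorem:2d} together with its manifold generalization (Remark~\ref{remark:extension-manifold}) supplies a consistent orientation of $\tilde\Pi$ in the base mesh; I would transport this orientation to every layer, giving each horizontal 3D edge the direction of its underlying 2D edge. Inside each hexahedron $K$, the four edges of $\Pi_{\tilde\Pi}$ present in $K$ are two copies of each of the two edges of $\tilde\Pi \cap e(\tilde K)$, so the consistency demanded by Convention~\ref{conv:3} inside $\tilde K$ transports verbatim to consistency inside $K$. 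Together with the extrusion orientation, this yields Convention~\ref{conv:2} on every hexahedron and proves the theorem.

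The main obstacle I anticipate is the parallel-class classification — establishing that the 3D sheet of a horizontal class really is the product of a 2D connecting line with the extrusion interval, with no unexpected identifications that mix distinct 2D classes or collapse horizontal edges into the extrusion class. Once this product structure is verified at a shared vertical face (the relevant bookkeeping amounting to the observation that such a face contributes exactly one 2D $\LocPar$ step in the base mesh for each 3D $\LocPar$ step among horizontal edges), the remainder is a direct lift of the 2D result.
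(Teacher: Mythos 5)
Your proposal is correct and follows essentially the same route as the paper: split the edges into vertical (extrusion) classes, oriented uniformly ``upward'' slab by slab, and horizontal classes that are layerwise copies of the 2D parallel classes, oriented by lifting the 2D orientation guaranteed by Theorem~\ref{theorem:2d} and its manifold extension. Your explicit worry about the product structure of the parallel classes is the same point the paper handles informally by describing the connecting sheets as extruded lines and as copies of the base domain between layers.
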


\begin{proof}
  To understand why this is the case, recall that \textit{all}
  two-dimensional quadrilateral meshes on such manifolds are orientable. In other words,
  in the original mesh, opposite edges can already be chosen to be
  parallel, and this is also the case for its replicas. 

  Next, consider the sets of parallel edges we may consider. Obviously, the edges
  of the original mesh and its replicas are parallel, and we can
  consistently orient them if we choose edge directions of the
  original mesh and its replicas the same. An alternative viewpoint is
  that the sheets that connects these sets of parallel edges are the
  ``extruded'' version of the line that connected these parallel edges
  in the two-dimensional mesh, and the orientability of the
  one-dimensional line then extends to the orientability of the
  corresponding sheet to which it gives rise.

  The only additional sets of parallel edges are the ones that
  connect the original mesh and its first replica, and then each
  replica with the next. The corresponding sheets can be thought of as
  copies of the two-dimensional domain spanned by the original mesh,
  located halfway between the replicas. Each of these sheets is
  obviously orientable: The edges of each of these new sets are
  independent of each other, and each class can be consistently
  oriented, for example by always choosing the direction from the
  original to the first replica, and from each replica to the next
  (i.e., an ``upward'' direction).
\end{proof}

\subsection{Meshes originating from tetrahedralizations}

The examples in Section~\ref{sec:3d-always} show that there is no
\textit{topological} characterization of those 
domains for which we can always find a consistent orientation of
edges. Rather, it is a question of \textit{how} that domain
was subdivided into cells. Indeed, we can show that for the very
general class of domains that can be subdivided into
tetrahedra, there also exists a subdivision into hexahedra with a consistent
orientation of edges:

\begin{theorem}
  \label{theorem:tet-to-hex}
  Let $\{T\}$ be a given subdivision of a domain $\Omega$ into a set of
  tetrahedra so that two distinct tetrahedra either have no intersection,
  share a common vertex, a complete edge, or a complete face. Divide each
  tetrahedron $T$ into four hexahedra, $K(T)_0,\ldots, K(T)_3$, by using the
  vertices, edge midpoints, face midpoints, and cell midpoint of $T$ as
  vertices for the hexahedra.  Then the edges of the mesh consisting of the
  union of these hexahedra, $\bigcup_{T, 0\le i< 4} K_i(T)$, are consistently
  orientable.
\end{theorem}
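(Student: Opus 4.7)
The plan is to exhibit a specific global edge orientation and check that it satisfies Convention~\ref{conv:4} on every hex in the subdivision. I would assign to each vertex of the hexahedralized mesh a natural \emph{depth} in $\{0,1,2,3\}$: depth $0$ for an original tet vertex, $1$ for a tet edge midpoint, $2$ for a tet face midpoint, and $3$ for a tet cell midpoint. This depth is a global property of each vertex and is independent of which tetrahedron or hexahedron we view it from.

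Next, I would inspect the edge structure of the standard subdivision. For $T = abcd$, the hex $K(T)_a$ has eight vertices: $a$ (depth $0$), the three midpoints $m_{ab}, m_{ac}, m_{ad}$ (depth $1$), the three face midpoints $m_{abc}, m_{abd}, m_{acd}$ (depth $2$), and the cell midpoint $m_T$ (depth $3$). Its $12$ edges come in three types: three edges $a$-$m_{ae}$ (depth $0$ to $1$), six edges from an edge midpoint to a face midpoint containing it (depth $1$ to $2$), and three edges from a face midpoint to $m_T$ (depth $2$ to $3$). In every case the endpoints of a hex edge have depths differing by exactly one.

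I would then orient every edge of the hexahedralization from its lower-depth endpoint to its higher-depth endpoint. Because the depth function is global, this orientation automatically agrees on edges shared between neighboring hexes, whether in the same tet or in different tets. To verify Convention~\ref{conv:4} on a given hex $K(T)_v$, I would place $v$ at the origin of a reference cube with $m_T$ at the opposite corner and enumerate the three classes of four parallel edges. Each class contains exactly one edge of depth $0$-$1$, two edges of depth $1$-$2$, and one edge of depth $2$-$3$; under our orientation all four point ``away from $v$'' along the same reference axis, which is precisely the configuration of the right panel of Fig.~\ref{fig:convention} with $v$ as the ``origin'' of the cell's coordinate system.

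The main obstacle I expect is the bookkeeping in the second step: verifying that the $12$ edges of $K(T)_v$ in the standard subdivision really do connect pairs of vertices at consecutive depths, and that they partition into the three parallel classes as described. Once this finite, local combinatorial fact has been checked on the reference hex, the rest of the argument is immediate; in particular, the counterexamples of Section~\ref{sec:3d-always} cannot occur here because every hex in the subdivision has a globally canonical choice of ``origin'' vertex $v$ and opposite vertex $m_T$, and all consistency is enforced by the single global depth function.
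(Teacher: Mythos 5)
Your proof is correct, and it takes a genuinely different route from the paper's. The paper argues through its general machinery of parallel-edge equivalence classes: it identifies each such class in the tet-to-hex subdivision with a connecting sheet that wraps around one original tetrahedral vertex, observes that this sheet is homeomorphic to a sphere (or a disk, for boundary vertices), and concludes orientability of each class from orientability of that surface. You instead bypass the parallel-set analysis entirely and exhibit an explicit global orientation: the ``depth'' grading (original vertex $\to$ edge midpoint $\to$ face midpoint $\to$ cell midpoint) is well defined globally because the tetrahedralization is conforming, every hex edge joins consecutive depths, and orienting low-to-high makes every hex consistently oriented with the original tet vertex as its origin --- which is exactly the local check that Conventions~\ref{conv:2} and \ref{conv:4} require, so no global argument is needed. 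Your enumeration of the twelve edges of $K(T)_v$ and their split into three parallel classes (each containing one depth $0$--$1$ edge, two depth $1$--$2$ edges, and one depth $2$--$3$ edge, all pointing away from $v$ along one reference axis) is the entire content of the verification, and you have essentially already carried it out. What each approach buys: the paper's proof stays within its sheet framework and explains structurally why the non-orientable counterexamples of Section~\ref{sec:3d-always} cannot occur (the sheets here are spheres, not M\"obius-type surfaces); yours is shorter, purely combinatorial, produces the orientation constructively in $O(N)$ trivially, and generalizes verbatim to the $2$d triangle-to-quadrilateral case and to barycentric-style subdivisions of simplicial meshes in any dimension.
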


Before we prove this claim, we note that the resulting hexahedra are
in almost all cases not close to the optimal cube shape. Also, almost
all vertices in the resulting mesh have a 
number of cells meeting at this vertex that deviates from the optimal
number of eight (encountered, for example, in a regular subdivision into
cubes). These meshes are therefore hardly optimal for finite element
computations. However, given the complexity of generating hexahedral
meshes, until recently many mesh generators used this approach to
generate an initial hexahedral mesh. (For example, the widely used
open source mesh generator gmsh \cite{gmsh} can use this
approach to generate hexahedral meshes.)

\begin{figure}[tbp]
  \begin{center}
    \phantom{.}
    \hfill
    \includegraphics[width=.22\textwidth]{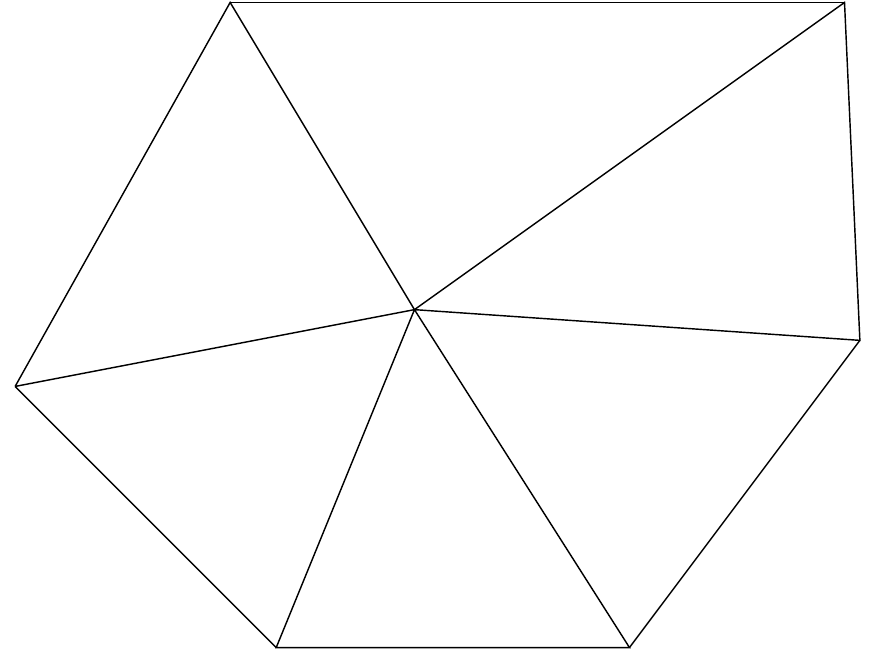}
    \hfill
    \includegraphics[width=.22\textwidth]{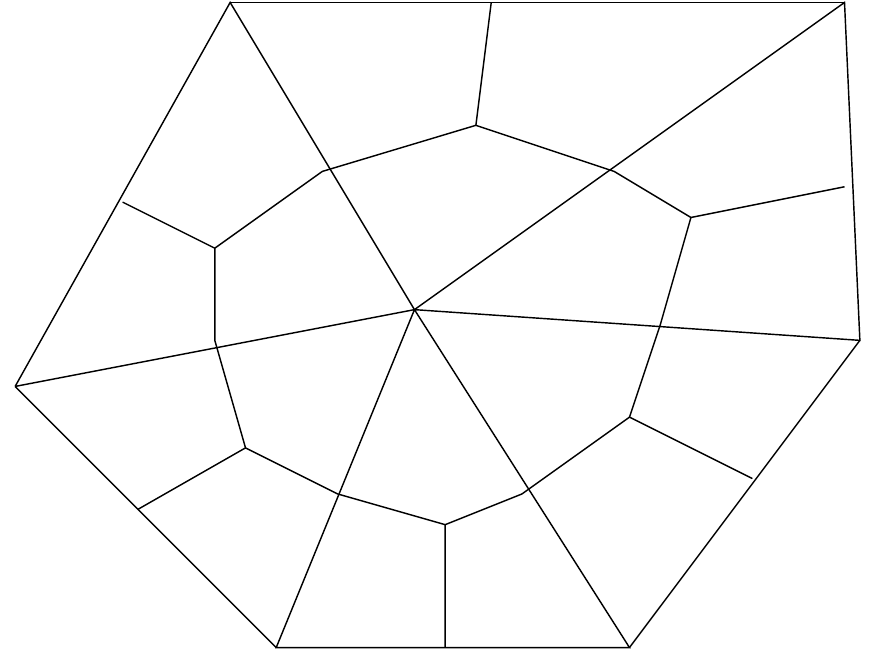}
    \hfill
    \phantom{.}
    \\[12pt]
    \includegraphics[width=.22\textwidth]{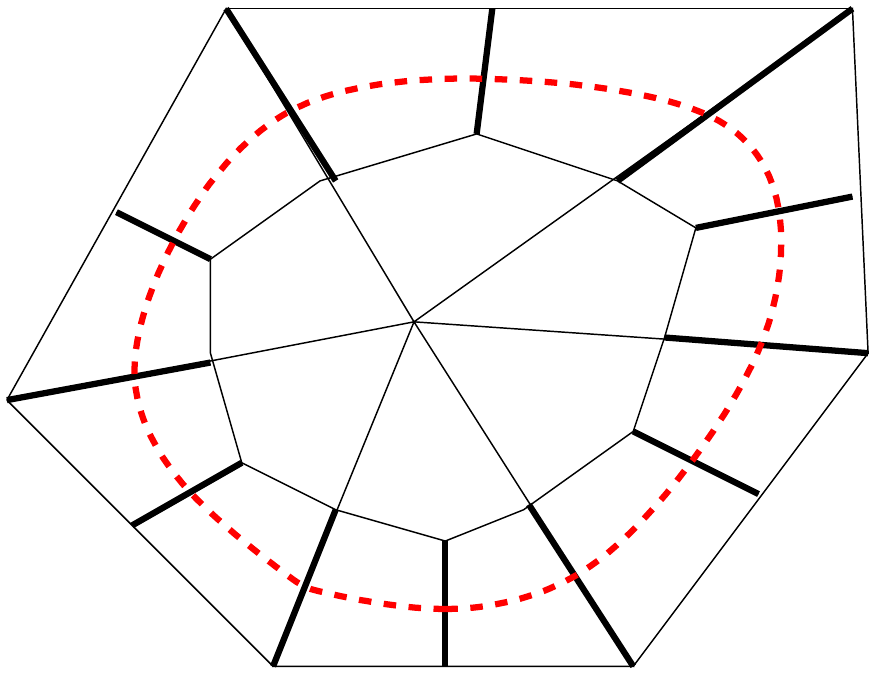}
    \hfill
    \includegraphics[width=.22\textwidth]{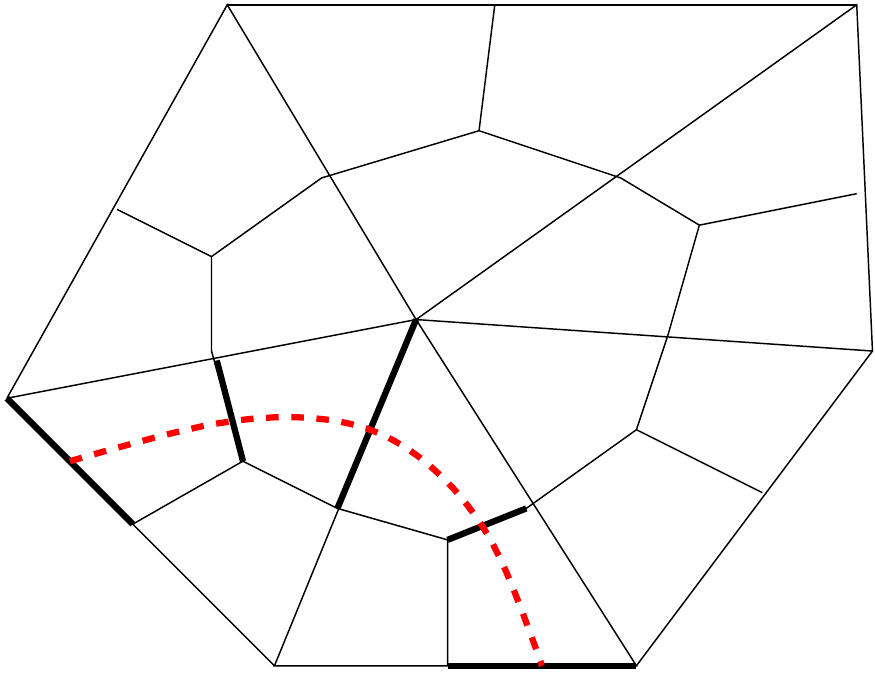}
    \hfill
    \includegraphics[width=.22\textwidth]{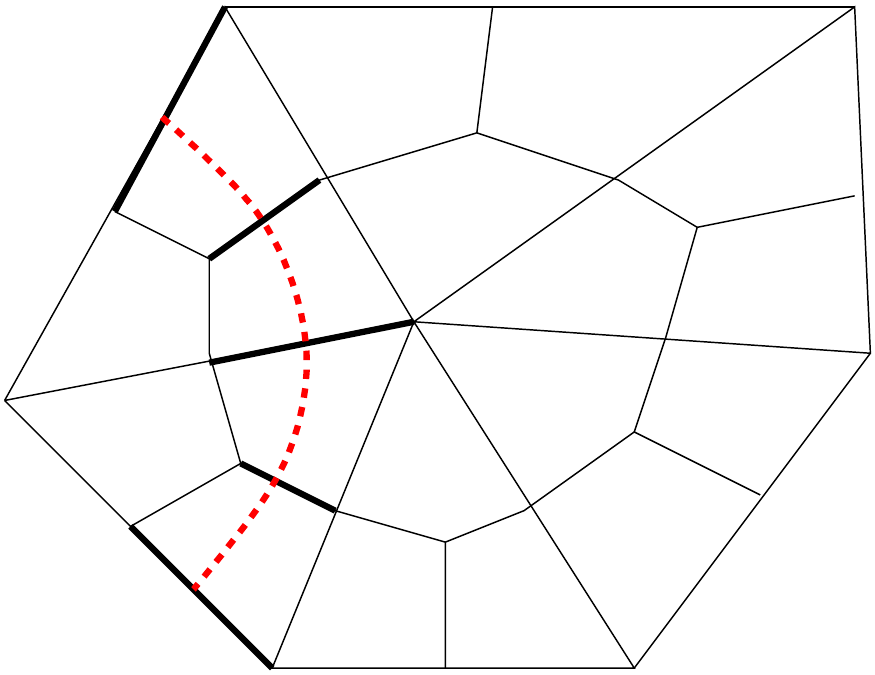}
    \caption{\it Construction in $d=2$ for the proof of
      Theorem~\ref{theorem:tet-to-hex}. Top: Part of a subdivision into
      triangles before and after cutting each triangle into three
      quadrilaterals (top row). Bottom: Three sets of parallel edges and their
      connecting line (bottom row).}
    \label{fig:theorem-2-2d}
  \end{center}
\end{figure}

\begin{proof}[of Theorem \ref{theorem:tet-to-hex}]
  In order to explain the proof in $d=3$, it is instructive to first
  consider a similar case in $d=2$, where we would cut all cells of a triangular mesh
  into three quadrilaterals, see
  Fig.~\ref{fig:theorem-2-2d}. From the edges of these
  quadrilaterals, we then generate independent sets of parallel
  edges; the bottom row of the figure shows three of these sets
  along with their connecting line. 

  Importantly,
  each of these lines forms a closed, non-self-intersecting loop
  around one of the original vertices of the triangles, cutting
  through all the cells in the second layer of quadrilaterals around
  this vertex (unless, of course, the vertex is at the boundary of the
  domain, in which case the line is not closed but starts and ends at
  the boundary). Indeed, the edges of each quadrilateral are
  part of the loops for two vertices of the triangle
  from which it arose. Each
  vertex thus gives rise to at least one set of parallel
  edges in the subdivision into quadrilaterals. These can, of course, all be
  oriented in $d=2$. (It may give rise to more than one such set if
  two parts of the domain touch at a vertex, i.e., if there are two
  sets of cells adjacent to a vertex that are not mutual face neighbors.)

  It is easy to generalize these considerations to the case $d=3$ (though we
  have not found good ways to visualize hexahedral meshes resulting from
  even a small collection of tetrahedra forming an unstructured mesh).
  First, we note that each original interior vertex
  now induces a set of parallel edges that is connected by a closed
  non-self-intersecting sheet around the vertex. Since it is homeomorphic to the
  surface of the unit sphere, it is of course orientable, and so are the edges
  of this parallel set. For original vertices on the boundary, the sheet is not
  closed, but it is obviously still orientable.
\end{proof}

\begin{includefaces}

\section{Extension to face orientations for 3d meshes}
\label{sec:3d-faces}

In 3d, one not only has the problem of aligning the coordinate system of
one-dimensional edges with that of cells, but a similar problem also appears
with the coordinate system of two-dimensional faces to which one would like to
associate data (such as degrees of freedom or their numeric values). This then
raises similar questions as in the edge case: can we assign an orientation --
in the simplest case a normal vector -- to each face that can uniquely be determined
from each of the adjacent cells simply from its position within this cell?

The answer to this first question is an unconditional ``yes'', by a similar argument
as the one we used for edge directions in 2d. Here, we first have to collect
the set of parallel \textit{faces}, denoted by $\parallelset(f)$, of a
particular face $f$. These faces are constructed by always hopping from one
face to the one opposite of it in a cell, so we can draw a line through all of
them. The second step is to give all of them a certain direction. In 2d, we
used that each line is orientable, so we could use the direction pointing from
one side of the line to the other as the direction for the edges it
intersects. Here, we simply use the ``forward'' (or ``backward'') direction as
we move along the line connecting the faces in $\parallelset(f)$, to provide
each face with a normal direction that matches that of all parallel faces
within its two adjacent cells. By construction, this line cannot split, so
even if it is a closed or self-intersecting line, we can always define such a
direction; thus there is a consistent orientation of all faces in
$\parallelset(f)$ for all subdivisions of domains into hexahedra, and an
algorithm is easily derived that does this in a time linear in the number of
faces or cells.

In reality, however, the orientation of the face's coordinate system is not
only described by a single bit such as whether its normal vector points into
or out of a cell. Rather, the coordinate system of the face may also be
rotated by $0^\circ, 90^\circ, 180^\circ$, or $270^\circ$ against the
coordinate system of the cell when restricted to that face. An alternative way
of describing these four rotations is to state the vertex at which the
coordinate system originates. Defining properties such as these does
not naturally fit into the graph theoretical context we have so far
used, and we will therefore use some concepts of discrete geometry in
the following.

To concisely define what properties we would like to have in the
finite element context, let us introduce two
definitions that assume that a mesh is given by a collection of
vertices $V$, linear edges $E$, quadrilateral faces $F$, and
hexahedral cells ${\tria}$ (where obviously each of faces bound one
or two cells, and each edge bounds at least one face). We then want to
associate with each cell $K\in {\tria}$ a right handed coordinate
system and denote $K^+$, and similarly for faces and edges which we
will then denote $f^+$ and $e^+$ for $f\in F, e\in E$. For each edge,
there are two possible ways of defining $e^+$, namely the two edge
orientations. For each face $f$, the previous paragraphs have shown that
there are 8 possibilities to define $f^+$. One can easily verify that
for each cell $K$, there are 24 possible $K^+$. One can then define
consistent orientations as follows:

\begin{convention}
  An oriented cell $K^+$ and its bounding faces $f^+(K)$ and bounding
  edges $e^+(K)$ are called \textit{consistently edge and face oriented} if the
  coordinate systems defined on the edges and faces bounding $K$ are
  simply the restrictions of the cell's coordinate system to the edge
  or face.
\end{convention}

\begin{figure}[tbp]
  \begin{center}
    \phantom{.}
    \hfill
    \includegraphics[width=.3\textwidth]{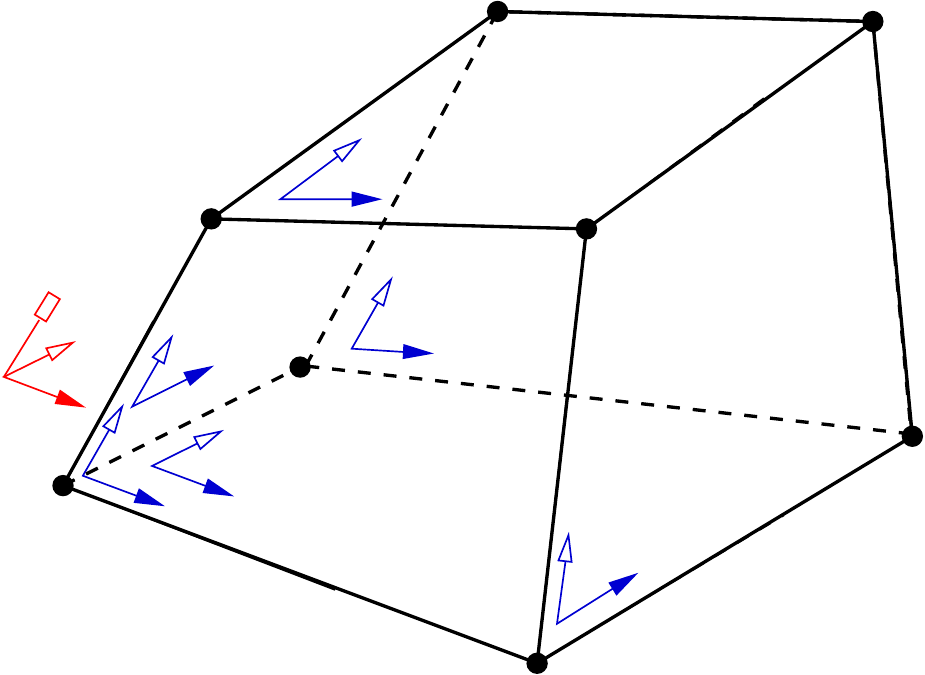}
    \hfill
    \hfill
    \includegraphics[width=.3\textwidth]{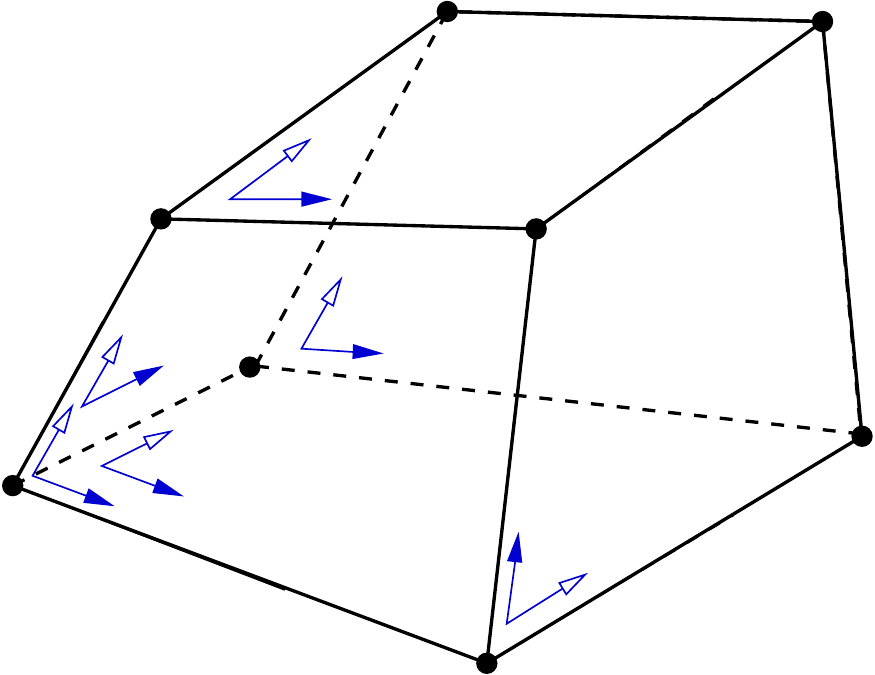}
    \hfill
    \includegraphics[width=.3\textwidth]{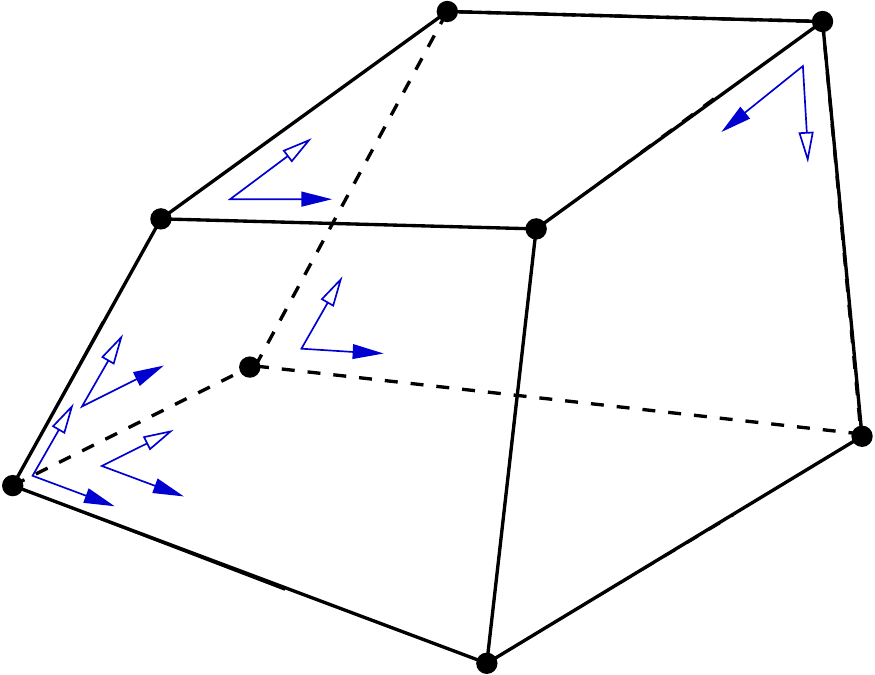}
    \hfill
    \phantom{.}
    \caption{\it Left: Illustration of consistently face oriented cell
    in 3d. For each face, a filled arrow indicates the local $x$-axis
    and an open arrow the local $y$ axis. The coordinate system of the
    cell is shown in red to the left, with the local $z$ axis using an
    open box as arrow. The cell would also be
    consistently edge oriented if edges were oriented as shown in the
    right panel of Fig.~\ref{fig:convention}.
    Center: A cell that could still be consistently edge oriented but
    is not consistently face oriented because the right face has an
    inverted coordinate system.
    Right: A cell where the right face's coordinate system has its
    origin at the wrong location. For this cell, edge orientations can
    not be chosen in such a way that they would be consistent with
    both the top and right face, for example.}
    \label{fig:3d-face-edge-orientations}
  \end{center}
\end{figure}

An example of a consistently oriented cell, and two cells that are
not consistently oriented are shown in
Fig.~\ref{fig:3d-face-edge-orientations}. Note that for a consistently
edge and face oriented cell, the coordinate systems defined on the
edges are naturally not only the restrictions of the cell coordinate
systems to this edge, but also the restrictions of the face coordinate
systems to this edge.

This convention is a natural
generalization of the two-dimensional edge orientations we have used
in Section~\ref{sec:2d}. There, we only considered the coordinate
systems on the edge (described by the edge orientation) and did not
mention a coordinate system on cells, but it is easy to see that we
can choose a unique right-handed coordinate system originating at the
one vertex with two outgoing edges, as long as opposite edges are
parallel. Given this local consistency, we can define globally
consistently oriented meshes:

\begin{convention}
  A mesh with associated coordinate systems,
  $(V,E^+,F^+,{\tria}^+)$, is called \textit{consistently edge and
    face oriented} if each cell and its bounding faces and edges are
  consistently oriented.
\end{convention}

Clearly, this is a stronger condition than just the edge orientations
considered in Section~\ref{sec:3d} and every mesh that is consistently
edge and face oriented is also consistently edge oriented following
Definition~\ref{conv:2} (as well as consistently face oriented, using
the definition earlier in this section). It is therefore clear that at least those
meshes that can not be consistently edge oriented can also not be
consistently edge and face oriented. On the other hand, we have shown
in Section~\ref{sec:3d-always-orientable} that important classes of
meshes are always edge orientable, and one may hope
that these are in fact also face orientable.

Unfortunately, this turns out to be not true, even for the otherwise
relatively well-behaved class of extruded meshes for which we showed in
Section~\ref{sec:3d-extrude} that their edges can always be oriented.
This can be shown for a simple counter-example involving only three
cells and that is shown in Fig.~\ref{fig:3d-face-edge-counter}.

\textbf{ACTUALLY, THIS APPEARS TO REQUIRE MORE THOUGHT. I'M NO LONGER
  CLEAR ON WHAT IS INTENDED OR IMPLEMENTED, AND THEY MAY BE DIFFERENT.}

\begin{figure}[tbp]
  \begin{center}
    \phantom{.}
    \hfill
    \includegraphics[width=.3\textwidth]{graphics/convention_3d_faces_counter}
    \hfill
    \phantom{.}
    \caption{\it xxx.}
    \label{fig:3d-face-edge-counter}
  \end{center}
\end{figure}

This example shows that there are simple examples of meshes that
cannot be consistently edge and face oriented at the same time. It is
conceivable that there is a simple characterization for which meshes
consistent orientations can be found, in the same way as we showed in
Section~\ref{sec:3d} that meshes can be consistently oriented if no
set of parallel edges is connected by a non-orientable surface. We
have not attempted to find such a characterization here since the
counterexample showed that the subset of consistently edge and face
orientable meshes is too small for practical purposes.

\end{includefaces}

\section{Conclusions}
\label{sec:conclusions}

Finite element codes can be made significantly simpler if they can
make assumptions about the relative orientations of coordinate
systems defined on cells, edges, and faces. If such assumptions always hold,
then this reduces the number
of cases one has to implement and, consequently, the potential for
bugs. In this paper, we have described a way to orient edges and the cells
they bound, and shown that not only
is it possible to choose edge directions consistently with regard to
this convention for two-dimensional quadrilateral meshes, but also
that there is an efficient algorithm to find such edge orientations.

On the other hand, it is not always possible to orient edges of
three-dimensional hexahedral meshes according to the three dimensional
generalization of this convention. The obvious generalization of
our two-dimensional algorithm is able to detect these cases, again in
optimal complexity, but the result implies that codes dealing
with hexahedral meshes necessarily have to store flags for each of the
edges of each cell that indicate the orientation of that edge relative
to the coordinate system of the cell. This is not a significant
overhead in terms of memory and possibly not in terms of algorithmic
complexity. Nevertheless, in actual practice, this has turned out to be an endless
source of frustration and bugs in \dealii{} as the cases where edge
orientations are relevant are restricted to the use of higher order
elements, as well as complex and three-dimensional geometries. In case of bugs,
methods generally converge but at suboptimal orders. Consequently,
debugging such cases and detecting where in the interplay of geometry,
mappings, degrees of freedoms, shape functions, and quadrature the bug
resides has proven to be a very significant challenge. This experience also
supports our claim that being able to enforce a convention in the
two-dimensional case almost certainly saved a great deal of development time.

At the same time, this paper at least identifies broad classes of
three-dimensional meshes for which one can always consistently orient
edges, and for which no special treatment of edges is
necessary. Through counter-examples, we have shown that there is no
topological description for which domains do or do not allow for
consistent orientations, but that it is indeed a property of how the
domain is subdivided into cells, and our analysis demonstrates ways by which
meshes can be constructed in ways so that edges can always be oriented
consistently. This analysis can therefore also provide constructive feedback
for the design of mesh generation algorithms.

\begin{includefaces}
In an ideal world, finite element codes would use meshes for which not
only edges but also faces are consistently orientable. As shown in
Section~\ref{sec:3d-faces}, this turns out to be a much more
restrictive condition, and a simple counterexample demonstrates that
we cannot hope for such a property for meshes that arise in realistic
situations.
\end{includefaces}

\paragraph*{Acknowledgments} WB would like to thank J.~M.~Landsberg,
J.-L.~Guermond, and A.~Ern for illuminating discussions.

\appendix

\section{Quadrilateral meshes on two-dimensional
  manifolds}
\label{sec:algebraic-topology}

Lemma~\ref{lemma:existence-for-one-parallel-set} provided the basis
for the proof that we can find consistent orientations of the edges
for all meshes that subdivide a domain $\Omega\subset\R^2$ into
quadrilaterals. Fundamentally, the reason for its truth is the
geometric fact that the piecewise linear curve connecting a set of
parallel edges has a unique ``left'' and ``right'' side.

As mentioned in Remark~\ref{remark:extension-manifold}, this can be
generalized to the connecting lines on orientable, two-dimensional
manifolds. Geometric proofs for this extension are complicated by two
facts: (i) the underlying manifold may not be smooth, for example in
the case of a mesh on the surface of a body with edges and corners
(such as a cube); (ii) we need to be careful how exactly we embed the
curve connecting parallel edges into the manifold. Consequently, it is
easier to avoid the language of geometry altogether. Rather, we will
use the language of combinatorial topology that in its essence only
uses what we are given: the quadrilateral mesh.

In the statement of the following result, we will use the combinatorial
topology definition of what an ``orientable''
manifolds is (see \cite{Lee11,Mun96} and below). This class of manifolds
includes all smooth two-dimensional manifolds that are orientable in
the differential geometry sense \cite{Spi65}, but also (parts of) the
boundaries of domains in $\R^3$ \cite[Chapter VI, Theorem
  7.15]{Bre93}. Furthermore, the manifolds we can consider need not be
naturally embedded in $\R^3$. A special
case of an orientable, two-dimensional manifold is of course the plane
$\R^2$, covering the situation of
Lemma~\ref{lemma:existence-for-one-parallel-set}.
Then the following is
true:

\begin{theorem}
  Let $\tria$ be a quadrilateral mesh with finitely many cells on
  an arbitrary, two-dimensional, orientable manifold, and $G_\tria$
  the associated graph. Then there exists a corresponding directed
  graph $G_\tria^+$ that is consistently oriented.
\end{theorem}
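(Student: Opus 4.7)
The plan is to reduce to the essential topological content -- orientability of the ambient manifold propagates to the ``sheets'' formed by globally parallel edges -- and to handle one equivalence class of parallel edges at a time. First I would apply Algorithm~\ref{alg:all-parallel-edges} unchanged to decompose $E$ into equivalence classes $\Pi_1,\ldots,\Pi_{N_\Pi}$ of globally parallel edges; this algorithm uses only combinatorial adjacency data and therefore extends verbatim to meshes on an arbitrary 2-manifold. Since Convention~\ref{conv:4} imposes constraints only between parallel edges inside a single cell, and two distinct parallel classes of the same cell are combinatorially independent, orientations may be chosen separately on each $\Pi_i$; it therefore suffices to orient a single class $\Pi$ consistently.

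To this end I would associate with $\Pi$ a two-dimensional CW complex $\Sigma_\Pi$, the ``connecting sheet'', built from one ``ribbon'' 2-cell for every cell $K\in\tria$ and every parallel pair $(e',e'')$ of edges of $K$ both of which lie in $\Pi$. Each ribbon is a combinatorial quadrilateral whose two opposite sides are $e'$ and $e''$; two ribbons are glued along their $\Pi$-sides whenever the corresponding cells share such an edge. A short verification shows that $\Sigma_\Pi$ is a two-manifold with boundary: every interior mesh edge in $\Pi$ lies on exactly two ribbons and every mesh-boundary edge in $\Pi$ on exactly one, while the ``longitudinal'' sides of ribbons form the boundary of $\Sigma_\Pi$. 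Self-intersections of the underlying connecting curve -- arising from cells in which both parallel pairs belong to $\Pi$ -- contribute two independent ribbons that cross but are not identified, since the consistency conditions for the two parallel pairs are combinatorially independent.

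The hard part is to show that $\Sigma_\Pi$ is orientable whenever the ambient manifold $M$ is. For this I would fix a combinatorial orientation of $M$, namely a coherent cyclic ordering of the vertices of every cell so that adjacent cells induce opposite orientations on their shared edge. Each ribbon inherits a cyclic ordering of its boundary from the orientation of its containing cell $K$, and compatibility of cell orientations across a shared edge $e\in\Pi$ translates directly into compatibility of the two ribbon orientations meeting along $e$. These cyclic orders therefore assemble into a coherent orientation of $\Sigma_\Pi$. This is the one place where orientability of $M$ enters in an essential way; on a non-orientable surface the same construction may well produce a non-orientable $\Sigma_\Pi$, consistent with Remark~\ref{remark:extension-manifold}.

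Once $\Sigma_\Pi$ is oriented, each $e\in\Pi$ appears as a transverse side of its adjacent ribbons, and the orientation of $\Sigma_\Pi$ together with the direction along the connecting curve singles out one of the two orientations of $e$ (informally, ``to the right'' of the curve). Compatibility of the ribbon orientations guarantees that this assignment is independent of the adjacent ribbon -- equivalently, the adjacent cell -- from which it is read off, and within any single cell $K$ the two edges of $\Pi\cap e(K)$ form opposite sides of a single ribbon and therefore receive parallel orientations, which is exactly Convention~\ref{conv:4}. Performing this construction independently on each $\Pi_i$ and combining the results yields the desired consistently oriented directed graph $G_\tria^+$.
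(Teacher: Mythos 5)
Your proof is correct, and it rests on the same essential input as the paper's -- the combinatorial orientability of the ambient manifold -- but it converts that input into edge directions by a genuinely different mechanism. The paper triangulates each quadrilateral, invokes the simplicial definition of orientability to coherently orient the resulting $2$-complex, transfers this to a coherent cyclic orientation of the quadrilaterals, and then walks the sequence of cells linked by a parallel class, selecting \emph{every second} induced edge direction; the complementarity of adjacent cyclic orientations is what makes this alternating choice consistent, including around closed loops. You instead build the connecting sheet $\Sigma_\Pi$ as an abstract ribbon surface, show it is an orientable manifold with boundary by pulling back the ambient orientation, and read off each edge direction as the transverse direction determined by the sheet orientation together with a direction along the core curve. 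Your route has the advantage of making explicit the ``sheet'' that the paper treats only heuristically (and uses centrally in the $3$d counterexamples), so it exhibits in one construction exactly where orientability of the sheet -- and hence of the mesh -- can fail in $3$d; the paper's route is more elementary and translates immediately into the linear-time algorithm of Section~\ref{sec:2d-orienting-ribbon}. The one soft spot in your write-up is the opening of your third paragraph: the existence of a coherent cyclic ordering of the \emph{quadrilateral} cells is exactly what needs to be extracted from the definition of orientability, which in the combinatorial-topology references is stated for simplicial complexes; the paper bridges this by adding a diagonal to each cell and noting that the coherent orientation of the triangulation induces one on the quadrilateral mesh independently of the choice of diagonals. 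Adding that one triangulation step would close the gap; everything downstream of it in your argument is sound, including the treatment of self-intersecting sheets as two unidentified ribbons and the verification that the transverse direction is read off consistently from either ribbon adjacent to a shared edge.
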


\begin{proof}
The proof is in essence analogous to that of
Theorem~\ref{theorem:2d}. The only piece that has changed is that we
need to provide for the orientability of edges in each of the parallel sets,
i.e., the extension of
Lemma~\ref{lemma:existence-for-one-parallel-set} to orientable,
two-dimensional manifolds. 

To this end, let us construct a
subdivision $\Delta$ of $\tria$ into 2-simplices (triangles) by adding to each
quadrilateral one of the two diagonals. $\Delta$ is then a
simplicial 2-complex.
A 2-complex is called coherently oriented if (i)~the edges of each
2-simplex are either oriented clockwise or counterclockwise, and (ii)~whenever
two 2-simplices share a common edge, the relative 
orientations of this shared edge are complementary (i.e., opposing),
see for example \cite[Chapter 5]{Lee11}. A 2-manifold is called
orientable if every 2-complex on it
can be coherently oriented. Thus, since
$\Delta$ is a 2-complex on a surface for which we have previously
assumed that it is orientable, we may
choose a coherent orientation of all edges of $\Delta$. Given the subdivision of each
cell $K\in\tria$ into two triangles with edges oriented either clockwise or
counterclockwise, this induces an orientation on
$\tria$ where (i) opposite edges in each cell $K$ are oriented
complementarily, and (ii) shared edges of two adjacent cells are
oriented complementarily. One quickly checks that this is independent
of the choice of diagonals. 

We have previously seen that each edge $e$ is part of at most two
cells and has therefore at most two opposite edges. 
For each set of parallel edges, $\parallelset_i\in\pi$, there is then a sequence
of cells connected by the edges $\parallelset_i$ that is either open or that
forms a closed loop. This 
is illustrated in Fig.~\ref{fig:algebraic-topology}. (Cells may
occur more than once in this sequence, but only once for each pair of
opposite edges in this cell.)

\begin{figure}[tbp]
  \begin{center}
    \phantom{.}
    \includegraphics[width=0.28\textwidth,angle=90]{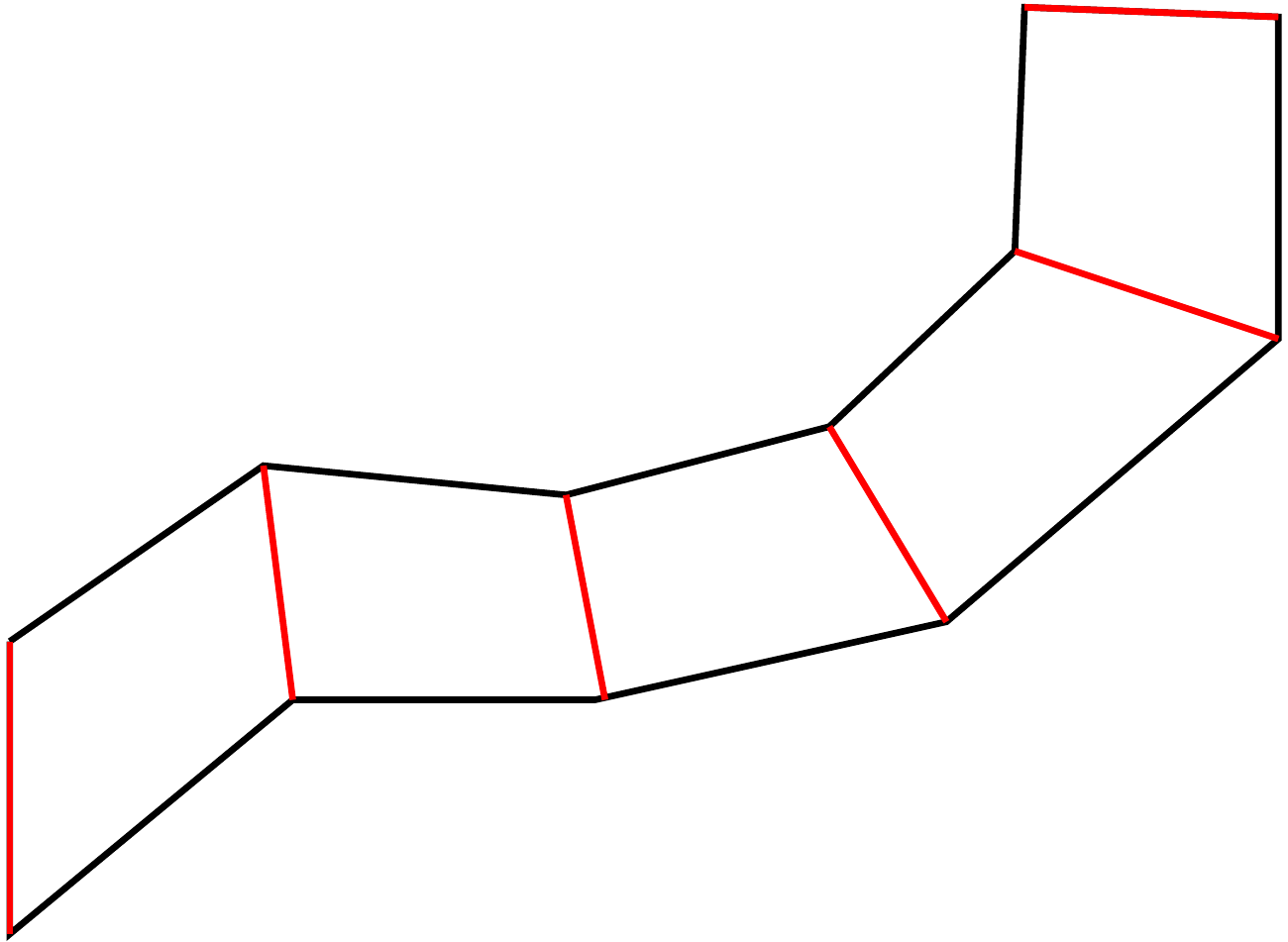}
    \hfill
    \includegraphics[width=0.28\textwidth,angle=90]{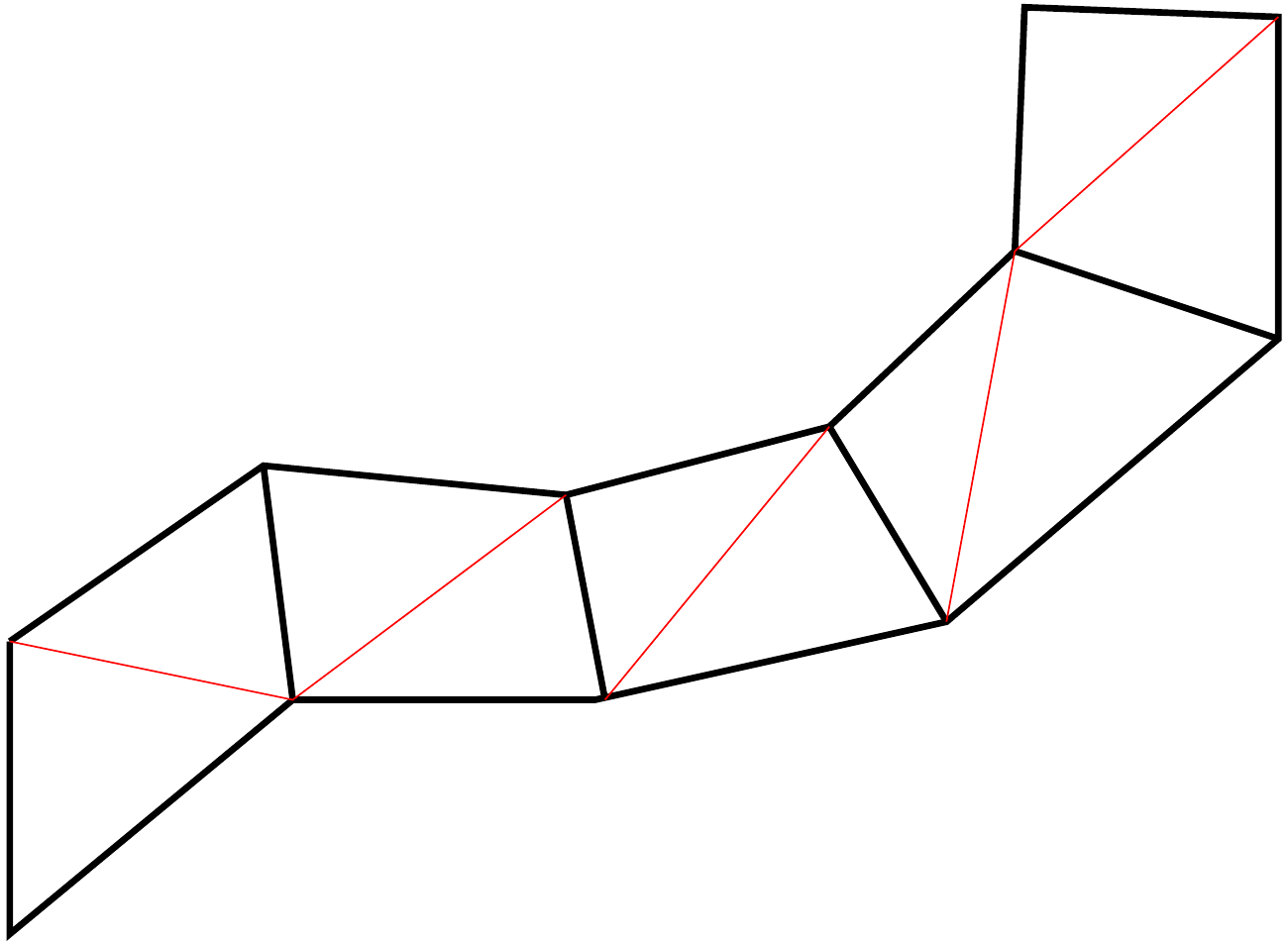}
    \hfill
    \includegraphics[width=0.28\textwidth,angle=90]{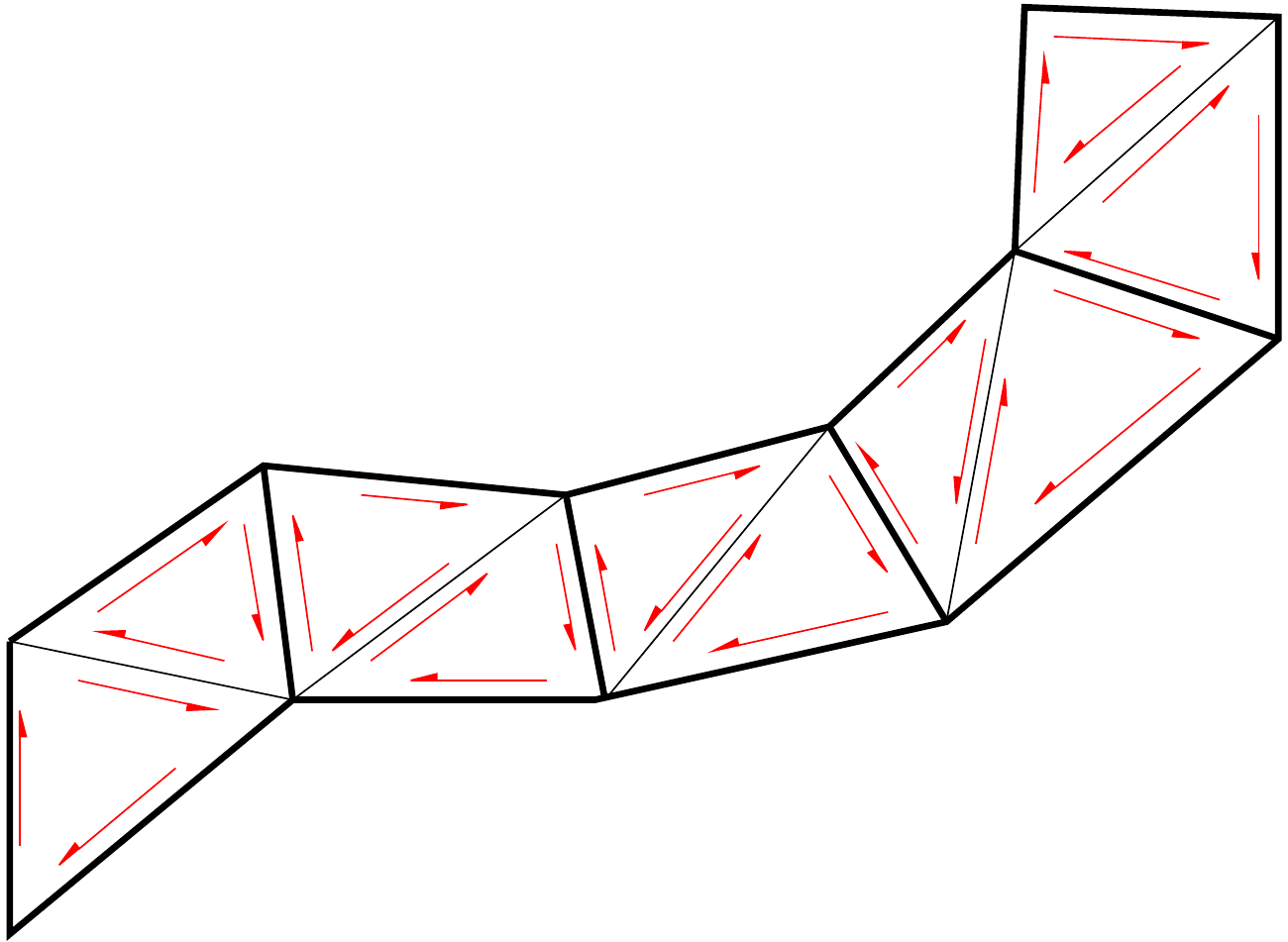}
    \hfill
    \includegraphics[width=0.28\textwidth,angle=90]{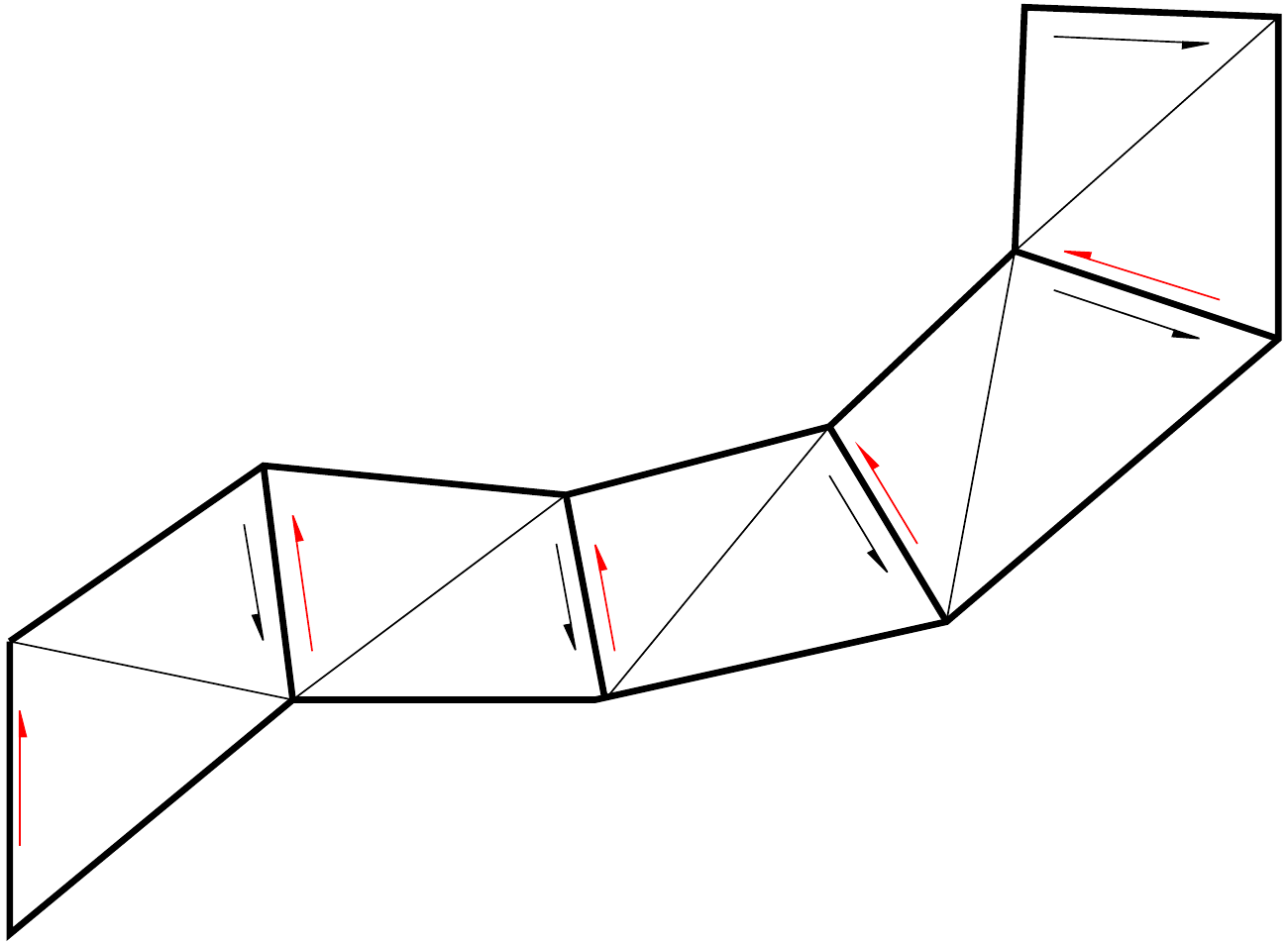}
    \phantom{.}
    \caption{\it Construction of consistent edge orientations via simplicial
      complexes. Left: The part of a mesh consisting of the cells adjacent to the
      edges (marked in red) that form one set of parallel edges. Center left:
      One possible subdivision of quadrilateral 
      cells into triangles by adding a diagonal to each
      cell. The result is a simplicial complex $\Delta$. Center right: Because
      the mesh exists on a 2-manifold that is 
      orientable, it is possible to assign
      clockwise and counter-clockwise orientations to all triangles so that
      they imply complementary directions for each shared
      edge. Right: Choosing every other direction for edges between
      cells, as we walk along the sequence of cells, yields a set of
      consistent edge orientations, i.e., opposite edges in quadrilaterals are
      oriented in a parallel direction.}
    \label{fig:algebraic-topology}
  \end{center}
\end{figure}

Now let us choose an edge
$e\in\parallelset_i$, and let $K$ be one (of possibly two) neighboring cell
of $e$. If the sequence of cells connected by $\parallelset_i$ is open, then
assign to $e$ the direction of this edge as defined in $K$. The edge $e'$
opposite of $e$ in $K$ is then assigned the direction defined in the
neighbor of $K$ beyond $e'$, and so forth in both directions starting
at $e$. It is easy to see that this leads to a consistently oriented
set of edges in the sense discussed in Section~\ref{sec:2d}.

If the sequence of cells connected by $\parallelset_i$ is closed, we need
to ensure that this does not lead to a conflict. If we follow
the sequence of cells, the previous construction chooses
every second encountered edge orientation; because each orientation is
complementary to the previous, choosing every other one leads to a
consistent set of edge orientations.

We then repeat this for all classes $\parallelset_i\in\pi$, thus finishing the
proof.
\end{proof}

\end{document}